\DeclareMathOperator{\rank}{rank}
\newcommand{\subalign}[1]{%
  \vcenter{%
    \Let@ \restore@math@cr \default@tag
    \baselineskip\fontdimen10 \scriptfont\tw@
    \advance\baselineskip\fontdimen12 \scriptfont\tw@
    \lineskip\thr@@\fontdimen8 \scriptfont\thr@@
    \lineskiplimit\lineskip
    \ialign{\hfil$\m@th\scriptstyle##$&$\m@th\scriptstyle{}##$\hfil\crcr
      #1\crcr
    }%
  }%
}
\newcommand{\bs}{\boldsymbol}
\newcommand{\bal}{\bs\alpha}
\newcommand{\bbt}{\bs\beta}
\newcommand{\bg}{\bs\gamma}
\newcommand{\be}{\bs e}
\newcommand{\bv}{\bs v}
\newcommand{\x}{\times}
\newcommand{\br}{\bs r}
\newcommand{\om}{\omega}
\newcommand{\bom}{\bs\om}
\renewcommand{\d}{\partial}
\newcommand{\I}{{\bf I}}
\newcommand{\J}{{\bf J}}
\newcommand{\E}{{\bf E}}
\newcommand{\Q}{{\bf Q}}
\newcommand{\B}{{\bf B}}
\newcommand{\q}{\quad}
\newcommand{\qq}{\qquad}
\newcommand{\g}{{\rm g}}
\newcommand{\lm}{\lambda}
\newcommand{\gam}{\gamma}
\newcommand{\vfi}{\varphi}
\DeclareMathOperator{\diag}{diag}
\DeclareMathOperator{\const}{const}
\newcommand{\eps}{\varepsilon}
\newcommand{\vth}{\vartheta}
\renewcommand{\theta}{\vartheta}
\renewcommand{\j}{\mathrm i}
\DeclareMathSymbol{\widetildesym}{\mathord}{largesymbols}{"65}
\newtheorem*{pro}{Proposition}
\newtheorem{utv}{Statement}
\newtheorem*{hyp}{Hypothesis}
\theoremstyle{remark}
\newtheorem*{rem}{Remark}
\newcommand{\settheoremtag}[1]{% \settheoremtag{<tag>}
  \let\oldthetheorem\thethm% Store \thetheorem
  \renewcommand{\thethm}{#1}% Redefine it to a fixed value
  \g@addto@macro\endthm{% At \end{theorem}, ...
    \addtocounter{thm}{-1}% ...restore theorem counter value and...
    \global\let\thethm\oldthetheorem}% ...restore \thetheorem
  }
\title{Bifurcation analysis of the problem of~a~``rubber'' ellipsoid of revolution
rolling~on~a~plane}
\author{A.\,A.Kilin, E.\,N.Pivovarova\\
{\small Ural Mathematical Center,}\\
{\small Udmurt State University,}\\
{\small ul. Universitetskaya 1, Izhevsk, 426034 Russia}}
\date{}
\begin{document}

\maketitle
\begin{abstract}
This paper is concerned with the problem of~an ellipsoid of revolution rolling
on a horizontal plane under the assumption that there is no slipping at the point of contact
and no spinning about the vertical. A~reduction of the equations of motion
to a fixed level set of first integrals is performed. Permanent rotations
corresponding to the rolling of an ellipsoid in a circle or in a straight line are found.
A linear stability analysis of~permanent rotations is carried out.
A complete classification of possible trajectories of the reduced system is performed using
a bifurcation analysis. A classification of the trajectories of the center of mass of the
ellipsoid depending on parameter values and initial conditions is performed.

\end{abstract}

\setcounter{tocdepth}{2}

\tableofcontents

\section{Introduction\label{sec1}}

Analysis of the motion of rigid bodies on a plane is one of the classical problems in mechanics.
To describe the~motion of a rigid body in the first approximation, use is often made of
conservative models: the model of an absolutely smooth plane (the point of contact can slip)
and the model of an absolutely rough plane (the classical nonholonomic model in which there
is no slipping at the point of contact). Another conservative
model of contact is that of a rolling ``rubber'' body --- it
assumes that there is no slipping at the point of
contact and no spinning about the vertical. Early studies of
the rolling motion of bodies without spinning go back to the classical works on modern mechanics:
this model was proposed by Hadamard~\cite{hadamard1895mouvements} and
developed by Beghin~\cite{beghin1929conditions}. In~\cite{ehlers2008rubber},
the term ``rubber rolling'' was introduced to describe this
motion model.
A systematization of results on the rolling motion of rigid
bodies using the above-mentioned model was presented in~\cite{borisov2008conservation,borisov2013hierarchy},
where existing tensor invariants of the system depending on
the shape of the body and the mass distribution in it were found.

The main feature of the problem of a body of revolution
moving on a plane is its integrability regardless of the
shape of the body. The same holds true for an~absolutely smooth
plane~\cite{appell1900lintegration,appell1904traite},
for an absolutely rough plane~\cite{chaplygin2002motion},
and for the rubber body rolling model~\cite{borisov2008conservation}.
Moreover, if the system is subject to one nonholonomic constraint which prohibits sliding in the direction of the projection of the axis of rotation, then such a problem is also integrable. This was shown in~\cite{kilin2023integrable}, where the authors addressed the problem of the rolling motion of a sphere with axisymmetric mass distribution. It was shown that this system admits a redundant set of first integrals and an invariant measure, and that one of the found integrals is similar to the additional integral found in~\cite{kozlov1978theorems} in the problem of a circular disk rolling on smooth ice. This allowed a reduction to a system with one degree of freedom and made it possible to show that all nonsingular trajectories are periodic functions of time.
It is interesting that, if the system is subject to a constraint prohibiting sliding in the direction perpendicular to the axis of rotation, then the problem becomes nonintegrable~\cite{kilin2023problem}.                                                                                                              For the case of the sphere with axisymmetric mass distribution it is shown that, in the general case, this system is nonintegrable (one additional integral of motion and an invariant measure are needed for the system to be integrable). It is numerically shown that the system exhibits chaotic dynamics. In addition, the problem of a body of revolution rolling on a plane
 remains integrable after adding an additional potential that is invariant under rotation about the vertical~\cite{borisov2022top,kilin2023stability,hier2002}.

 There is a related problem, that of a homogeneous sphere rolling without
slipping on a surface of revolution, which similar to the problem of a body of revolution rolling on a plane.
This problem is also integrable regardless of the form of the surface. This is shown in~\cite{routh,borisov2002rolling}
using the same methods of analysis (in particular, reduction by symmetries).
In~\cite{dalla2022dynamics,fasso2022some} the rolling motion of a sphere on an arbitrary
rotating surface of revolution is investigated, and the authors of~\cite{borisov2019nonholonomic,ivanova2020non}
examine integrable cases of this problem: the rolling motions of a sphere on the rotating
surface of a cone and a circular cylinder. In~\cite{dragovic2022spherical,dragovic2023spherical}
a new interesting problem concerning the rolling of spheres between two spherical surfaces
is addressed and cases of its integrability are found, and in~\cite{borisov2012generalized}
the integrability of the related problem of the motion of a rigid body in a spherical
suspension is shown~\cite{fedorov1988motion}.

Compared to the study of body motion using the~classical nonholonomic model prohibiting
only slipping at the point of contact, much less attention has been given in the literature to
the dynamics of bodies using the~rubber body model. In particular, analysis of
rigid body dynamics using this model was made in~\cite{cendra2006rolling,cendra2010impulsive,koiller2007rubber,ehlers2008rubber}.
In~\cite{bizyaev2018invariant}, the rubber body model was employed to examine the dynamics of an
unbalanced disk. The rolling motion of spheres with different mass distributions was studied
in~\cite{cendra2006rolling,cendra2010impulsive,koiller2007rubber,ehlers2008rubber,bolsinov2012rolling,borisov2013rolling,kazakov2013strange,borisov2016regular,mamaev2020dynamics}, and the motion of a triaxial ellipsoid was examined in~\cite{bizyaev2013integrability}.

However, the ``rubber'' motion model is applied not only
in handling some model problems concerning, e.g., disks or spheres, but also in
simulating the motion of~mobile robots. In particular, the problem of using various
motion models for simulating the dynamics of mobile robots is discussed
in~\cite{ylikorpi2014gyroscopic}, where it is noted that the~rolling motion of a sphere
without slipping or spinning
is the classical ball-plate problem~\cite{jurdjevic1993geometry,bicchi1995planning}.
In \cite{ylikorpi2014gyroscopic}, the authors also note that in many studies
the vertical component of the angular velocity of the spherical robot is assumed
to be zero since the possibility of omnidirectional motion allows motion control without using
vertical rotations. Such a motion model is used in analyzing the dynamics of a spherical
robot with a pendulum drive~\cite{zheng2011control,cai2012path}, a spherical robot with an
internal platform~\cite{alves2003design,zhan2011design}, a spherical robot with internal omniwheels~\cite{chen2012design,karavaev2022spherical}, and a spherical robot actuated by changing the gyrostatic momentum~\cite{artemova2020dynamics}. In addition, some authors~\cite{zhan2011design,chen2012design,marigo2000rolling,mukherjee2002motion},
who use the ``rubber'' motion model consider only kinematics without taking into account the
system dynamics.

In this paper, we carry out a qualitative analysis of~the motion of an ellipsoid of revolution
on a horizontal plane using the model of rubber body. Although we
restrict our attention to a special case of~body shape, many conclusions of this paper
will be valid for a body of an arbitrary shape as well. In addition, the analysis of the
dynamics in specific problems remains of interest to researchers (see, e.g., the recent
paper on qualitative analysis of the rolling motion of~a~torus
on a plane~\cite{garcia2024analytical}).

The paper is organized as follows. Section~\ref{sec2} presents equations of motion,
invariants, and a reduction to a~system with one degree of freedom for a body of revolution
of arbitrary form. In Section~\ref{sec3},
an explicit form of the equations of motion for an ellipsoid of revolution is presented.
Section~\ref{sec4} provides a qualitative analysis of the dynamics of the ellipsoid and
a complete classification of motions depending on parameters and values of first integrals.
In Section~\ref{sec5}, an analysis of the trajectories of~the center of mass of the ellipsoid
in absolute space is made and conclusions concerning the boundedness and unboundedness
of the trajectories are drawn.

\section{Equations of motion for a body of~revolution on a plane\label{sec_equ}}

\subsection{Configuration space and kinematic relations\label{sec2}}

Consider the rolling motion of a dynamically symmetric rigid body of revolution
of mass $m$ on an
absolutely rough
horizontal plane
(Fig.~\ref{fig1}). We will assume that the body rolls without slipping (the velocity
of the point of contact is zero) and without spinning (the projection of the angular velocity
of the body onto the vertical is zero), and has only one point of contact, $P$, with the
supporting plane.

\begin{figure}[!ht]
\centering\includegraphics{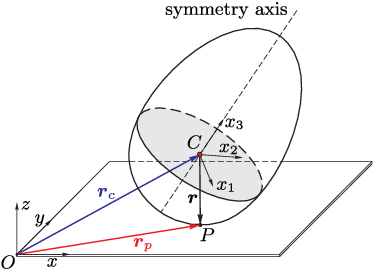}
\caption{A body of revolution on a plane.\label{fig1}}
\end{figure}

We introduce two coordinate systems:
\begin{enumerate}\itemsep=-2pt
\item[--] a fixed (inertial) coordinate system $Oxyz$ with unit vectors $\bal, \bbt,
\bg$, where $\bg$ is a vertical unit vector;
\item[--] a moving coordinate system $Cx_1x_2x_3$ with vectors
    $\be_1,\be_2,\be_3$ rigidly attached to the body, and with origin at its center of mass,
    $C$, such that the axis $Cx_3$ is directed along the symmetry axis.
\end{enumerate}

We will specify the position of the body of revolution by the coordinates of its center
of mass $\br_c=(x_c, y_c, z_c)\in \mathbb{R}^3$
in the coordinate system $Oxyz$, and its orientation in space, by the orthogonal matrix $\Q$,
whose columns are the projections of the unit vectors $\bal,\,\bbt,\,\bg$ onto the
axes of the moving coordinate system $Cx_1x_2x_3$
$$
{\bf Q}=\left(
\begin{aligned}
&\alpha_1 & \beta_1 & &\gamma_1 \\
&\alpha_2 & \beta_2 & &\gamma_2 \\
&\alpha_3 & \beta_3 & &\gamma_3
\end{aligned}
\right)
\in SO(3).
$$
 Hence, the configuration space of the problem of the free motion of the body of revolution
 is the product
$\mathcal{N}=\{(\br_c,\Q)\}=\mathbb{R}^3\x SO(3)$.

To describe the dynamics of the body on a plane, we introduce the vector of the angular
velocity $\bom=(\om_1,\om_2,\om_3)$ and the vector of the velocity of the center of
mass $\bv=(v_1,v_2,v_3)$ and refer them to the moving coordinate system $Cx_1x_2x_3$\footnote{Here and below, unless
otherwise specified, all vectors will be referred to the moving coordinate system
$Cx_1x_2x_3$.}. Using these
velocities (quasi-velocities), the evolution of the vector $\br_c$ and the matrix $\Q$ can be
written as  the kinematic relations
\begin{equation}\begin{gathered}
\dot\bal=\bal\x\bom,\q\dot\bbt=\bbt\x\bom,\q\dot\bg=\bg\x\bom,\\
\dot\br_c=\Q^\top\bv.
\end{gathered}\label{kin}
\end{equation}

\subsection{Constraint equations and dynamical equations}
We assume that the body rolls without loss of contact with the plane. This assumption
implies that the following holonomic constraint is imposed on the system:
\begin{equation}z_c+(\br,\bg)=0,\label{constr_hol}\end{equation}
  where $\br$ is the radius vector of the point of contact (see Fig.~\ref{fig1}), which can
  be expressed in terms of the components of the vector $\bg$. Here and in what follows, by $(\bs\cdot,\bs\cdot)$ we denote the scalar product. In the case of an arbitrary
  axisymmetric surface of the body, the dependence $\br(\bg)$ can be represented as
\[\br(\bg)=(\chi_1(\gam_3)\gam_1,\chi_1(\gam_3)\gam_2,\chi_2(\gam_3)),\]
where $\chi_1(\gam_3)$ and $\chi_2(\gam_3)$ are arbitrary functions related to each other by~\cite{hier2002}
\[\frac{d \chi_2}{d \gam_3}=\chi_1-\frac{1-\gam_3^2}{\gam_3}\frac{d \chi_1}{d \gam_3}.\]
This relation is obtained from the condition $(\dot\br,\bg)=0$.

The no slip condition at the point of contact is described by a
constraint of the
form
\begin{equation}\bs f=\bv+\bom\x\br={\bs 0}.\label{constr}\end{equation}

\begin{rem}
Two projections of equation~\eqref{constr}, $(\bs f, \bal)$ and $(\bs f, \bbt)$, are
nonholonomic constraints, and the third, $(\bs f, \bg)$, is the time derivative of
equation~\eqref{constr_hol}.
\end{rem}

The condition that there be no spinning about the vertical is described by
another nonholonomic constraint:
\begin{equation}g = (\bom,\bg)=0.\label{constr2}\end{equation}

The kinetic energy $T$ and the potential energy $U$ of the system have the form
\[T = \frac12 (\bom, \I \bom) + \frac12 m(\bv, \bv),\q U=-m\g (\br,\bg),\]
where $\I=\diag(i_1,i_1,i_3)$ is the principal central tensor of inertia of the body and
$\g$ is the free-fall acceleration.

\begin{rem}
Note that, due to the axial symmetry, the potential energy $U$ of the system
depends only on the inclination of the symmetry axis relative to the vertical, that is,
the component $\gam_3$.
\end{rem}

We write the equations of motion of the system in the absence of external forces as Lagrange
equations in quasi-velocities with constraints~\cite{poinc,dtt}
\begin{equation}
\begin{aligned}
&\dfrac{\rm d}{{\rm d} t}\left(\dfrac{\d \mathcal{L}}{\d \bv}\right) + \bom\x\dfrac{\d \mathcal{L}}{\d \bv}
=\left(\dfrac{\d  \bs f}{\d \bv}\right)^{\!\!\top}\bs\lm ,&\\
&\dfrac{\rm d}{{\rm d} t}\!\left(\dfrac{\d \mathcal{L}}{\d \bom}\right)\! +
\bom\x\dfrac{\d \mathcal{L}}{\d \bom} + \bv\x\dfrac{\d \mathcal{L}}{\d
\bv} + \bg\x\dfrac{\d \mathcal{L}}{\d
\bg}  =\left(\dfrac{\d  \bs f}{\d \bom}\right)^{\!\!\top}\bs\lm + \left(\dfrac{\d  g}{\d \bom}\right)\mu,&
\end{aligned}\label{dallagrgen}
\end{equation}
where $\mathcal{L}=T-U$ is the Lagrangian function of the system, and
$\bs\lm$, $\mu$ are the undetermined multipliers corresponding to the reactions of the
constraints~\eqref{constr}
and~\eqref{constr2}, respectively. The undetermined multipliers can be found from the common
solution of equations~\eqref{dallagrgen} and the time derivatives of the
constraints~\eqref{constr} and \eqref{constr2}:
\begin{equation*}
\begin{gathered}
\bs\lm = -m((\bom\x\br)^{\bs\cdot} + \bom\x(\bom\x\br)),\\
\mu = \frac{(\bom\x\J\bom + m\br\x(\bom\x\dot\br) + m\g\bg\x\br, \J^{-1}\bg)}{(\bg,\J^{-1}\bg)},
\end{gathered}
\end{equation*}
where $\J=\I+m\br^2\E_3 - m\br\otimes\br$ is the tensor of inertia of the body relative to
the point of contact. Here $\E_3$ is the $3\x3$ identity matrix and the symbol
$\otimes$ stands for a tensor multiplication of the
vectors which associates to the pair of vectors
$\bs a,\,\bs
b$ the matrix ${\bf A}$ with components $A_{i, j}=a_ib_j$.

After finding the undetermined multipliers and eliminating the velocity $\bv$ (using the
constraint~\eqref{constr}), the
equations of motion~\eqref{dallagrgen} reduce to one vector equation
\begin{equation}
\J\dot\bom+\bom\x\J\bom + m\br\x(\bom\x\dot\br) +m\g\bg\x\br - \mu\bg=0.
\label{tmp2}
\end{equation}

Adding the kinematic equation of evolution of the vector $\bg$ to equation~\eqref{tmp2}, we
obtain the following closed reduced system of equations:
\begin{equation}\begin{aligned}
&\J\dot\bom+\bom\!\x\!\J\bom + m\br\!\x\!(\bom\!\x\!\dot\br) +m\g\bg\!\x\!\br - \mu\bg=0,\\
&\dot\bg=\bg\x\bom.
\end{aligned}\label{eqred}\end{equation}
By ``construction'', this system admits a first integral of motion defined by the
constraint~\eqref{constr2} $(\bom,\bg)=\const$. The flow defined by equations~\eqref{eqred}
and restricted to the zero level set of this integral describes the evolution of the variables
$\bom$ and $\bg$ in the problem of a body of revolution rolling without slipping or spinning
on a horizontal plane.

To reconstruct the velocity of the center of mass, use should be made of the constraint
equation~\eqref{constr}. The coordinate $z_c$ is defined from equation~\eqref{constr_hol},
and the remaining variables, from the quadratures~\eqref{kin}.

\goodbreak

\subsection{Invariants and symmetries\label{sec_inv}}

The equations of motion~\eqref{eqred} admit the following integrals of motion:
\begin{enumerate}
\item[--] the energy integral
 \[\mathcal{E}=\frac12(\J\bom,\bom)-m\g(\br,\bg) ;\]
 \item[--] the geometric integral
\[\mathcal{F}_0=(\bg,\bg)=1;\]
\item[--] the no-spin constraint~\eqref{constr2}
\[\mathcal{F}_1=(\bom,\bg)=0;\]
\item[--] the additional integral linear in the angular velocity
\[\mathcal{F}_2=J(\gam_3)\om_3,\,\, J(\gam_3) = \sqrt{i_1\gam_3^2+i_3(1-\gam_3^2) +m(\br,\bg)^2}.\]
\end{enumerate}
\begin{rem}
The system~\eqref{eqred} admits the energy integral $\mathcal{E}$ and the additional integral $\mathcal{F}_2$ only on the zero level set of the integral $\mathcal{F}_1$.
\end{rem}
\begin{rem}
We note that in the presence of the constraint~\eqref{constr2} the additional integral is
expressed in terms of elementary functions for an arbitrary body of revolution, whereas
in the classical nonholonomic case the additional integral is written in elementary
functions only for the spherical surface of the body.
\end{rem}

In addition, equations~\eqref{eqred} also admit the invariant measure $\rho d\bom d\bg$
with density~\cite{hier2002}
\begin{equation*}\rho(\gam_3)=(i_1+m\br^2)J(\gam_3)\label{meas}\end{equation*}
and possess the symmetry field
\begin{equation}\bs\varsigma = \om_1\frac{\d }{\d \om_2}-\om_2\frac{\d}{\d \om_1}+\gam_1\frac{\d }{\d \gam_2}-\gam_2\frac{\d}{\d \gam_1},\label{vecfield}\end{equation}
which corresponds to invariance of the system under rotations about the axis of
dynamical symmetry of the body.

{\it Thus, the problem of a body of revolution rolling on a plane without slipping or spinning
about the vertical is integrable by the Euler\,--\,Jacobi theorem and reduces to quadratures~\cite{borisov2008conservation}.}

\subsection{Reduction to a system with one degree of freedom\label{oned}}

The existence of four integrals of motion allows us to reduce the system~\eqref{eqred} to a
system with one degree of freedom.

To perform a reduction, we parameterize the vector $\bg$ by the nutation angle $\vth$ and
the angle of proper rotation $\vfi$ in the form
\begin{equation}\gam_1 = \sin\vth\sin\vfi,\q \gam_2=\sin\vth\cos\vfi,\q \gam_3=\cos\vth.\label{newvars}\end{equation}

It is easy to show that on the fixed level set of the first integrals
\[\mathcal{F}_0 = 1,\q \mathcal{F}_1 = 0, \q \mathcal{F}_2=k \] the system of
equations~\eqref{eqred} reduces to a system with one degree of freedom
\begin{equation}\begin{aligned}
&\dot\vth = p_\vth,\qq \dot{p}_\vth = \frac{G(\vth)}{B(\vth)},\\ &G(\vth) = \left(\frac{k^2\cot\vth}{\sin^2\!\vth}-\frac{B'(\vth)}{2}p_\vth^2-\frac{\partial U(\vth)}{\partial \vth}\right),
\end{aligned}\label{eqsys}\end{equation}
where $B(\vth)=i_1+m\br^2$, $U(\vth)$ is the potential energy of the system. The system~\eqref{eqsys}
admits the energy integral
\[\mathcal{E} = \frac{B(\vth)p_\vth^2}{2} +\frac{k^2}{2\sin^2\!\vth} + U(\vth).\]

In this case, the quadrature for angle $\vfi$ has the form
\begin{equation*}\dot\vfi = \frac{\om_3}{\sin^2\!\vth},\label{dfi}\end{equation*}
where
\[\om_3=\frac{k}{J(\gam_3)}=\frac{k}{J(\vth)}.\]

The values of the variables $\om_1$ and $\om_2$ can be reconstructed from the relations
\begin{equation}
\begin{aligned}&\om_1 = \dot\vth\cos\vfi - \om_3\cot\vth\sin\vfi,\\
&\om_2 = -\dot\vth\sin\vfi - \om_3\cot\vth\cos\vfi.
\end{aligned}\label{w12}
\end{equation}

Next, we consider the case of an ellipsoid of revolution and perform a complete
classification of possible motions of
this system.

%\section{Analysis of the dynamics of an ellipsoid of revolution}\label{sec3}

%\subsection{Explicit form of the system and nondimensionalization}

\section{Explicit form of the equations of motion\\ for~an~ellipsoid of~revolution\label{sec3}}

We now turn our attention to a specific body of revolution,
an ellipsoid of revolution with semiaxes $b_1$ and $b_3$ and
with a center of mass displaced along the symmetry axis
by distance $a$ (see Fig.~\ref{fig2}).

\begin{figure*}[!ht]
\centering\includegraphics[width=\textwidth]{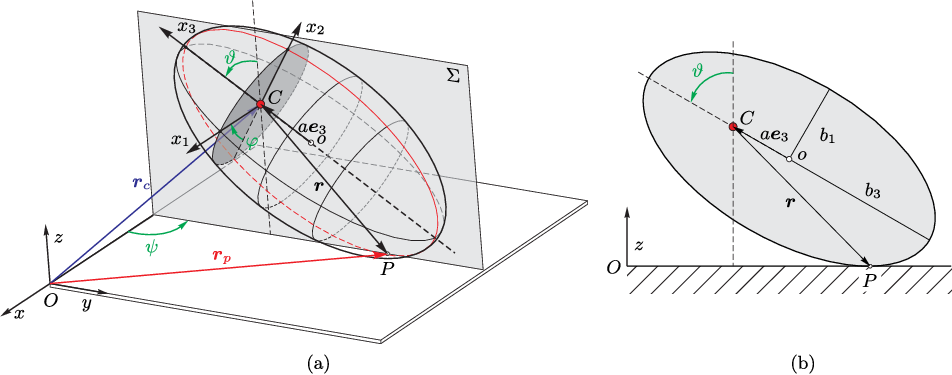}
\caption{ (a) Diagrammatic representation of an
ellipsoid on a plane. The plane
$\rm\Sigma$ is a vertical plane passing through the symmetry
axis of the ellipsoid. The red
elliptic curve is the intersection of the ellipsoid and the
plane $\rm\Sigma$. (b) Representation of the ellipsoid on the
plane in a meridional section.\label{fig2}}
\end{figure*}

The radius vector of the point of contact of the ellipsoid
can be represented as
\begin{equation*}\br=-\frac{\B\bg}{\sqrt{(\bg,\B\bg)}}-a\be_3,\qq \B=\diag(b_1^2,b_1^2,b_3^2).\label{r}\end{equation*}

\begin{rem}
In this case, the explicit expressions for the functions
$\chi_1(\gam_3)$ and $\chi_2(\gam_3)$ for the case of an
ellipsoid of revolution have the form
\begin{gather*}\chi_1(\gam_3)=-\frac{b_1^2}{\sqrt{b_1^2(1-\gam_3^2)+b_3^2\gam_3^2}},\\ \chi_2(\gam_3)=-\frac{b_3^2\gam_3}{\sqrt{b_1^2(1-\gam_3^2)+b_3^2\gam_3^2}}-a.\end{gather*}
\end{rem}

The equations of motion~\eqref{eqred} written in the new
variables depend on many parameters.
In order to decrease their number, we introduce
dimensionless parameters: by choosing the length of semiaxis
$b_3$ as a unit of length, the mass of the ellipsoid $m$ as a
unit of mass, and the quantity $\sqrt{b_3/\g}$ as a unit of
time, we obtain four dimensionless parameters
\begin{gather*}\alpha=\frac{a}{b_3}\in[0,1],\q \beta = \frac{b_1}{b_3}\in(0,\infty),\\ \nu=\frac{i_3}{i_1}\in(0,2],\q \eta=\frac{mb_3^2}{i_1}\in(0,\infty)\end{gather*}
and two dimensionless constants of the integrals
\[\eps = \frac{\mathcal{E}}{m\g b_3},\q \kappa^2 = \frac{k^2}{m\g b_3}.\]
The values $\beta\in(0,1)$ correspond to
an ``prolate'' ellipsoid, and $\beta\in(1, \infty)$, to an ``oblate'' one.
In the limiting case $\beta=0$ we obtain a thin rod,
and for $\beta\rightarrow\infty$ we obtain a thin disk. However, these cases correspond to the motion of a body with a sharp
edge, which we do not consider in this paper.

After transformation to the new variables and
nondimensionalization for the case of an ellipsoid of
revolution, the potential energy $U(\vth)$ takes the form
\begin{gather*}U(\vth)=\alpha\cos\vth + Z(\vth), \\ Z(\vth)=\sqrt{\beta^2\sin^2\!\vth+\cos^2\!\vth}>0\q\forall\,\vth\in[0,\pi],\end{gather*}
and the equations of motion~\eqref{eqsys} can be written as
\begin{equation*}\begin{aligned}
&\dot\vth = p_\vth,\\
&\dot{p}_\vth = \frac{1}{B(\vth)}\left(\frac{\kappa^2\cot\vth}{\sin^2\!\vth}-\frac{B'(\vth)}{2}p_\vth^2+\alpha\sin\vth
+\frac{(1-\beta^2)\sin\vth\cos\vth}{Z(\vth)}\right),
\end{aligned}\label{eqsysell}\end{equation*}
where
\[B(\vth) = \frac1\eta + \frac1{Z(\vth)^2}\left(\beta^4\sin^2\!\vth + (\alpha Z(\vth) - \cos\vth)^2\right).\]
We note here that the physical meaning of $Z(\vth)$ is the
height of the geometric center of the ellipsoid above the
supporting plane.

Next, we perform a complete classification of possible motions
of the resulting system.

\section{Bifurcation analysis\label{sec4}}

\subsection{General definitions}

To classify the motions of the system, we use the
topological approach described in~\cite{bolsinov2010topology,bizyaev2015topology}.
For this we consider the system~\eqref{eqred} in the
redundant coordinates $\bs z = (\bom,\bg)$, which defines the
phase flow in the space $\mathcal{M}_6=\{\bs z\}$.
This system possesses four integrals of motion $\mathcal{F}_0$, $\mathcal{F}_1$, $\mathcal{F}_2$, and $\mathcal{E}$. We restrict the system~\eqref{eqred} to the common level set of the
integrals $\mathcal{F}_0$ and $\mathcal{F}_1$
\[\mathcal{M}_4=\{\bs z\,|\, \mathcal{F}_0=1,\,\mathcal{F}_1=0\}\subset \mathcal{M}_6.\]
Let us define the integral map of this system ${\rm\Phi}\,:\,\mathcal{M}_4\rightarrow \mathbb{R}^2$
\[\bs z\rightarrow {\rm\Phi}(\bs z) = (\mathcal{F}_2(\bs z), \mathcal{E}(\bs z))\in \mathbb{R}^2,\q \bs z\in \mathcal{M}_4.\]

\begin{rem}
  As local coordinates on $\mathcal{M}_4$ we can choose,
  for example, the quantities $(\vth,\,\vfi,\,\om_1,\,\om_2)$.
  However, some solutions of interest lie on the singularities
  of the chosen coordinates ($\vth=0,\pi$). Therefore,
  it is more convenient to perform calculations in the
  redundant coordinates.
\end{rem}

The complete image of the phase space ${\rm\Phi}(\mathcal{M}_4)$
defines the \textit{region of possible motions $($RPM$)$}
of the system on the plane of first integrals $(\kappa,\eps)$. Each point on the plane of first
integrals $(\kappa,\eps)$
corresponds to the integral manifold
\[\mathcal{M}_{\kappa,\eps} \!=\! \{\bs z\,|\,\mathcal{F}_0=1,\mathcal{F}_1=0,\mathcal{F}_2(\bs z)=\kappa, \mathcal{E}(\bs z)=\eps\}\!\subset\! \mathcal{M}_6,\]
which can contain several connectedness components (generally
two-dimensional tori).

Changes in the values of the integrals $\kappa$ and $\eps$
lead to bifurcations of the integral manifolds. These
bifurcations correspond to a \textit{critical set} of the
integral map which  is defined as follows:
\[S = \{\bs z\in \mathcal{M}_4\,|\, \rank{\mathrm d}{\rm\Phi}<2\}.\]
The image of the critical set $S$ in the space of first
integrals ${\rm\Phi}(S)$ and the region of possible
motions ${\rm\Phi}(\mathcal{M}_4)$ form a \textit{bifurcation
diagram} of the integrable system.

The set $S$ is a union of two subsets:
\begin{gather*}S_0=\{\bs z\in \mathcal{M}_4\,|\, \rank{\mathrm d}{\rm\Phi}=0\},\\ S_1=\{\bs z\in \mathcal{M}_4\,|\, \rank{\mathrm d}{\rm\Phi}=1\}.\end{gather*}
In the case at hand the subset $S_0$ consists of separate points, and the
subset $S_1$ is a one-parameter family of closed curves. The points belonging to $S_0$ are
the fixed points of the system~\eqref{eqred}, and the set $S_1$ is filled by
periodic solutions of the system~\eqref{eqred}, which correspond
to fixed points of the reduced system~\eqref{eqsys}.

\subsection{Critical sets}

To find the sets $S_1$ or $S_0$ in redundant coordinates, it is necessary to solve the
system of equations given by the equations
\begin{equation}\left.\rank\frac{\partial(\mathcal{F}_0,\mathcal{F}_1,\mathcal{F}_2,\mathcal{E})}{\partial \bs z}\right|_{\mathcal{M}_4}=3\label{rank3}\end{equation}
or
\begin{equation}\left.\rank\frac{\partial(\mathcal{F}_0,\mathcal{F}_1,\mathcal{F}_2,\mathcal{E})}{\partial \bs z}\right|_{\mathcal{M}_4}=2,\label{rank2}\end{equation} respectively. This can be done, for example,
using the method of undetermined multipliers (see, e.g.,~\cite{borisov2013topological}).
We will not present here this procedure in explicit form since it is standard and the intermediate
expressions are cumbersome.

Next, we consider each of the critical subsets separately.

\subsubsection{The critical subset $S_1$}

Analysis of equations~\eqref{rank3} shows that they can have different
solutions, depending on whether or not the parameter $\alpha$ is equal to zero.
We first consider the general case $\alpha\neq0$.

Solving the system of equations given by~\eqref{rank3},
we can prove the following proposition.

\begin{pro}
  In the case $\alpha\neq0$ the critical subset $S_1$ of the integral map $\Phi$ has the form
  \[S_1=S_1^\alpha=\{\bs z\,|\, \bom=\om_0(\vth)\bg\x(\be_3\x\bg),\,\bg=\bg(\vth,\vfi)\},\]
  where the dependence $\bg(\vth,\vfi)$ is defined by the expressions~\eqref{newvars},
  $\om_0(\vth)$ has the form
  \begin{equation}\om_0^2=\frac{-\eta\sin^2\!\vth\left(\cos\vth(1-\beta^2)+\alpha Z(\vth)\right)}{\cos\vth Z(\vth)\left(\eta(Z(\vth)+\alpha\cos\vth)^2 + \cos^2\!\vth + \nu\sin^2\!\vth\right)},\label{usl2}\end{equation}
  $\vfi\in[0,2\pi)$, and the range of angle $\vth$ is determined from the
  condition that the expression~\eqref{usl2} be nonnegative for $\om_0^2$.
\end{pro}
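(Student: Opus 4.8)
The plan is to characterize the set $S_1$ as those phase points where the three differentials $\mathrm{d}\mathcal{F}_0,\mathrm{d}\mathcal{F}_1,\mathrm{d}\mathcal{F}_2,\mathrm{d}\mathcal{E}$, restricted to $\mathcal{M}_4$, span only a two-dimensional space. Since on $\mathcal{M}_4$ the forms $\mathrm{d}\mathcal{F}_0$ and $\mathrm{d}\mathcal{F}_1$ are already independent (they cut out $\mathcal{M}_4$), the rank-$3$ condition \eqref{rank3} is equivalent to the existence of multipliers $\lambda_0,\lambda_1$ and $(\sigma,\tau)\neq(0,0)$ with
\[\sigma\,\mathrm{d}\mathcal{F}_2 + \tau\,\mathrm{d}\mathcal{E} = \lambda_0\,\mathrm{d}\mathcal{F}_0 + \lambda_1\,\mathrm{d}\mathcal{F}_1\]
as covectors on $\mathcal{M}_6$. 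First I would write this out componentwise in $\bs z=(\bom,\bg)$: the $\bom$-block gives $\sigma\,\partial\mathcal{F}_2/\partial\bom + \tau\,\J\bom = \lambda_1\bg$, and the $\bg$-block gives the analogous relation with the explicit $\bg$-derivatives of $J(\gam_3)\om_3$ and of $\mathcal{E}=\tfrac12(\J\bom,\bom)-m\g(\br,\bg)$. Because $\partial\mathcal{F}_2/\partial\bom = J(\gam_3)\be_3$, the $\bom$-equation reads $\tau\,\J\bom = \lambda_1\bg - \sigma J(\gam_3)\be_3$, which already forces $\bom$ to lie in the plane spanned by $\bg$ and $\be_3$ once $\tau\neq0$; together with the constraint $(\bom,\bg)=0$ this pins down $\bom$ up to scale as a multiple of $\bg\x(\be_3\x\bg)$, the component of $\be_3$ orthogonal to $\bg$. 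This is the geometric content of the stated form of $S_1^\alpha$.

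Next I would substitute the ansatz $\bom=\om_0(\vth)\,\bg\x(\be_3\x\bg)$ back into the remaining ($\bg$-block) equations and into the equations of motion \eqref{eqred} to determine $\om_0$. The cleaner route is to observe that a point of $S_1$ is, by the discussion after \eqref{rank2}, a fixed point of the reduced one-degree-of-freedom system \eqref{eqsys}: it must satisfy $p_\vth=0$ and $G(\vth)=0$. Setting $p_\vth=0$ indeed makes $\bom$ proportional to $\be_3$'s horizontal part (from \eqref{w12} with $\dot\vth=0$, one gets $\om_1=-\om_3\cot\vth\sin\vfi$, $\om_2=-\om_3\cot\vth\cos\vfi$, i.e. exactly $\bom=-\om_3\cot\vth\cdot(\text{unit horizontal }\be_3\text{-component})$, matching the claimed vector up to normalization), and $\om_3=\om_3(\vth)$ is then a free parameter along the curve. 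The equation $G(\vth)=0$ — equivalently $\kappa^2\cot\vth/\sin^2\vth = \partial U/\partial\vth$ after using $p_\vth=0$ — becomes, upon eliminating $\kappa$ in favor of $\om_3$ via $\om_3 = k/J(\vth)$ and hence $\kappa^2 = \om_3^2 J(\vth)^2/(m\g b_3)$, a single relation expressing $\om_0^2$ (a fixed multiple of $\om_3^2/\sin^2\vth$ by the normalization in $\bg\x(\be_3\x\bg)$, whose squared norm is $\sin^2\vth$) in terms of $\vth$ and the dimensionless parameters. Carrying out this algebraic elimination, using the explicit $U(\vth)=\alpha\cos\vth+Z(\vth)$, $U'(\vth)=-\alpha\sin\vth+(\beta^2-1)\sin\vth\cos\vth/Z(\vth)$, and the explicit $J(\vth)^2 = i_1\cos^2\vth+i_3\sin^2\vth+m(\br,\bg)^2$ with $(\br,\bg)=-(Z(\vth)+\alpha\cos\vth)$ in dimensionless form, should produce precisely formula \eqref{usl2}; the factor $\alpha\neq0$ enters because the numerator $\cos\vth(1-\beta^2)+\alpha Z(\vth)$ does not vanish identically and the step of dividing through to solve for $\om_0^2$ is legitimate.

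Finally I would record that $\vfi$ enters only through $\bg(\vth,\vfi)$ and is otherwise free (this is the rotational symmetry \eqref{vecfield}), that the range of $\vth$ is exactly the set where the right-hand side of \eqref{usl2} is $\ge 0$ so that a real $\om_0$ exists, and that at such points one genuinely has $\rank\,\mathrm{d}\Phi = 1$ rather than $0$ — i.e. $\mathrm{d}\mathcal{F}_2$ and $\mathrm{d}\mathcal{E}$ are not both in the span of $\mathrm{d}\mathcal{F}_0,\mathrm{d}\mathcal{F}_1$ — which holds off the finitely many $\vth$ where additionally $\om_0'(\vth)=0$ (those belong to $S_0$). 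I expect the main obstacle to be purely computational: verifying that the elimination of $\kappa$ from $G(\vth)=0$ reorganizes cleanly into the quoted closed form \eqref{usl2}, in particular checking that the denominator factor $\eta(Z(\vth)+\alpha\cos\vth)^2+\cos^2\vth+\nu\sin^2\vth$ is exactly $\eta J(\vth)^2/i_1$ (equivalently $B(\vth)$ combined with $J(\vth)$), and tracking the sign and the power of $\sin\vth$ correctly through the normalization of $\bg\x(\be_3\x\bg)$. The conceptual steps — reducing \eqref{rank3} to a multiplier equation, reading off that $\bom\parallel\bg\x(\be_3\x\bg)$, and identifying $S_1$ with the fixed-point set of \eqref{eqsys} — are straightforward; it is the bookkeeping in the last substitution that needs care.
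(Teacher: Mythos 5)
Your proposal is correct and follows essentially the paper's own route: the paper obtains $S_1^\alpha$ by solving the rank condition \eqref{rank3} with undetermined multipliers (it deliberately omits the computation as standard and cumbersome) and it likewise identifies $S_1$ with the fixed points of the reduced system \eqref{eqsys}, so your elimination of $\kappa$ from $G_0(\vth)=0$ via $\kappa=J(\vth)\om_3$ is exactly the intended calculation. With the normalization you flag (taking $\om_0$ as the magnitude of $\bom$, i.e. $\om_3=\om_0\sin\vth$), the elimination does reproduce \eqref{usl2}, and this reading is the one consistent with \eqref{s1} and \eqref{pr}.

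Two small corrections. First, from $\tau\,\J\bom=\lambda_1\bg-\sigma J\be_3$ you conclude directly that $\bom$ lies in the plane spanned by $\bg$ and $\be_3$; this needs the additional (easy) observation that this plane is $\J$-invariant, which holds because $\br\in\mathrm{span}(\bg,\be_3)$ and $\J=(i_1+m\br^2)\E_3+(i_3-i_1)\,\be_3\otimes\be_3-m\,\br\otimes\br$, so that $\J^{-1}$ preserves the plane; without this remark the step is a non sequitur. Second, your parenthetical that the angles with $\om_0'(\vth)=0$ belong to $S_0$ is wrong: by the proposition describing $S_0$, the rank-zero set consists only of the two points $\bom=0$, $\bg=\pm\be_3$. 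At the cusp values of $\vth$ (and at the inclined equilibria where $\om_0=0$) the differentials of $\F_2$ and $\E$ do not both vanish on $\M_4$ — the cusp is a feature of the image curve $(\kappa(\vth),\eps(\vth))$, not of the pointwise rank — so those points remain in $S_1$, which is needed for the stated equality $S_1=S_1^\alpha$ to hold without excluded angles.
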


The critical set $S_1^\alpha$ is filled by periodic trajectories for which the
dependences $\vth(t)$ and $\vfi(t)$ have the form
\begin{equation}
  \vth=\vth_0=\const,\q \vfi=\vfi_0+\frac{\om_0}{\sin\vth_0}t,
  \label{pr}
\end{equation}
where $\vfi_0\in[0,2\pi)$ defines the initial angle of proper rotation. Due to the axial
symmetry of the system, without loss of generality, one can set $\vfi_0=0$.
As is customary, we will call the solutions \eqref{pr} {\it permanent rotations}. In the reduced
system~\eqref{eqsys} they correspond to
a one-parameter family of fixed points
\begin{equation}\vth=\vth_0=\const,\q p_\vth=0.\label{prth}\end{equation}
In the problem we consider here, under permanent rotations, the trajectory of the ellipsoid
in the fixed coordinate system is closed and the following proposition holds.

\begin{pro}
The fixed points~\eqref{prth} correspond to the rolling of an ellipsoid in a circle with a
constant inclination angle of the symmetry axis, $\vth_0$. The point of contact
traces out a circle of radius $\rho_p=\frac{\beta^2}{Z(\vth_0)}\tan\vth_0$, and the center
of mass traces out a circle of radius $\rho_c =Z(\vth_0)\tan\vth_0+ \alpha\sin\vth_0$.
\end{pro}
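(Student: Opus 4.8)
The plan is to reconstruct the motion of the ellipsoid in the fixed frame from the permanent‑rotation solution \eqref{pr}, \eqref{prth} by integrating the kinematic equations \eqref{kin}, and then to read off the two circles. A preliminary observation used throughout: along \eqref{pr} one has $(\bg,\B\bg)=b_1^2\sin^2\vth_0+b_3^2\cos^2\vth_0=\const$, so the denominator $\sqrt{(\bg,\B\bg)}$, equal to $Z(\vth_0)$ in dimensionless variables, in $\br=-\B\bg/\sqrt{(\bg,\B\bg)}-a\be_3$ is constant. Consequently $(\br,\bg)=-Z(\vth_0)-\alpha\cos\vth_0=\const$, and by the holonomic constraint \eqref{constr_hol} the center of mass stays at the constant height $z_c=Z(\vth_0)+\alpha\cos\vth_0$. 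Since the inclination of the symmetry axis equals $\vth_0=\const$ directly from \eqref{prth}, what remains is to prove that the horizontal projections of the center of mass $C$ and of the geometric contact point $P$ move along circles, and to compute their radii.

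The structural step is to identify the absolute motion with a uniform precession of the symmetry axis about a fixed vertical line. Let $\bn(t):=\Q^\top\be_3$ be the symmetry axis in the fixed frame. Its vertical component is $(\bg,\be_3)=\cos\vth_0=\const$, so $\bn$ stays on a fixed cone of half-angle $\vth_0$; and from \eqref{kin} one has $\dot\bn=\Q^\top(\bom\x\be_3)$, so $|\dot\bn|=|\bom\x\be_3|=\sqrt{\om_1^2+\om_2^2}=|\om_3|\cot\vth_0$ is constant by \eqref{w12} (recall $\dot\vth=0$). Hence $\bn$ traverses the base circle of the cone at the constant angular rate $\Omega:=|\dot\bn|/\sin\vth_0=|\om_3|\cos\vth_0/\sin^2\vth_0$, i.e. it performs a uniform precession about the vertical. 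Factoring this precession out of $\Q^\top$, the residual rotation is a rotation about the fixed axis $\bn(0)$ whose rate $\om_3-\Omega\cos\vth_0$ (up to sign) is again constant, since both $(\Q^\top\bom,\bn)=\om_3$ and the vertical component of $\bn$ are constant; therefore the whole configuration $(\Q^\top(t),\br_c(t))$ is a uniform precession about a vertical axis. (Equivalently, one may exhibit the standard permanent-rotation ansatz, check that it reproduces $(\bom(t),\bg(t))$, and invoke uniqueness of solutions of the equations of motion.) In particular $C$ and $P$ trace horizontal circles about the precession axis, of radii $\rho_c=|\dot\br_c|/\Omega$ and $\rho_p=|\dot\br_P|/\Omega$.

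It then remains to evaluate the two speeds. For the center of mass, \eqref{kin} and the no-slip constraint \eqref{constr} give $\dot\br_c=\Q^\top\bv=-\Q^\top(\bom\x\br)$, so $|\dot\br_c|=|\bom\x\br|$; substituting $\bom=\om_0\,\bg\x(\be_3\x\bg)=\om_0(\be_3-\cos\vth_0\,\bg)$ from the preceding proposition and $\br=-\B\bg/Z(\vth_0)-a\be_3$, and simplifying (the cross-terms collect into $(\bg,\B\bg)=Z(\vth_0)^2$), one gets in dimensionless form $\bom\x\br=\om_0\big(Z(\vth_0)+\alpha\cos\vth_0\big)\,\bg\x\be_3$, hence $|\dot\br_c|=|\om_0|\big(Z(\vth_0)+\alpha\cos\vth_0\big)\sin\vth_0$. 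Using $\om_3=\om_0\sin^2\vth_0$ (the third component of $\bom$), so that $\Omega=|\om_0\cos\vth_0|$, this yields $\rho_c=\big(Z(\vth_0)+\alpha\cos\vth_0\big)\tan\vth_0=Z(\vth_0)\tan\vth_0+\alpha\sin\vth_0$. For the contact point, write $\br_P=\br_c+\Q^\top\br$; differentiating, using $\dot\br_c=\Q^\top\bv$ and the identity $\frac{d}{dt}(\Q^\top\br)=\Q^\top(\bom\x\br+\dot\br)$ (which follows from \eqref{kin}), and cancelling by \eqref{constr}, one obtains the clean relation $\dot\br_P=\Q^\top\dot\br$, so $|\dot\br_P|=|\dot\br|$. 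Since $\sqrt{(\bg,\B\bg)}$ is constant, $\dot\br=-\B\dot\bg/Z(\vth_0)$ with $\dot\bg=\dot\vfi\sin\vth_0(\cos\vfi,-\sin\vfi,0)$ from \eqref{newvars}, whence $|\dot\br_P|=\beta^2|\dot\vfi|\sin\vth_0/Z(\vth_0)$; using $\dot\vfi=\om_3/\sin^2\vth_0$ so that $|\dot\vfi|/\Omega=1/|\cos\vth_0|$, this gives $\rho_p=\beta^2\tan\vth_0/Z(\vth_0)$, as claimed.

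The main obstacle is the structural step: certifying rigorously that the absolute trajectory is \emph{exactly} a uniform precession (and not merely a bounded, constant-speed motion), so that ``rolling in a circle'' is literally correct; the clean routes are the ansatz-plus-uniqueness argument or identification of the precession angle as the cyclic variable of the $SE(2)$-symmetry of the supporting plane. One should also record that the formulas degenerate as $\vth_0\to0,\pi$ (where $P$ merges with the symmetry axis) and that $\cos\vth_0\to0$ sends $\rho_c,\rho_p\to\infty$, i.e. the circle opens up into the straight-line rolling treated separately.
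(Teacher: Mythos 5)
Your argument is correct and reaches both radius formulas, but it travels a different road than the paper. The paper's proof is an explicit Euler-angle integration: it parameterizes $\Q$ by $(\vth,\psi,\vfi)$, notes that on \eqref{pr} the precession quadrature \eqref{dpsi} gives $\psi=\om_\psi t$ with $\om_\psi=\const$, and then integrates the kinematic quadratures \eqref{kin} in closed form to obtain $x_c=\rho_c\sin\psi$, $y_c=-\rho_c\cos\psi$ and, via $\br_p=\br_c+\Q^\top\br$, the analogous parametrization for the contact point — the circle is exhibited explicitly. You instead work frame-free: you compute the speeds $|\dot\br_c|=|\bom\x\br|$ and $|\dot\br_P|=|\dot\br|$ (your identities $\bom\x\br=\om_0(Z+\alpha\cos\vth_0)\,\bg\x\be_3$ and $\dot\br_P=\Q^\top\dot\br$ both check out) and divide by the precession rate $\Omega=|\om_0\cos\vth_0|$. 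What the paper's route buys is that the circularity is never in question — it falls out of the explicit integration; what your route buys is shorter algebra and a formula $\rho=|\dot\br|/\Omega$ that makes the geometric origin of both radii transparent.

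The one soft spot is exactly the one you flag: the claim that the full configuration undergoes a uniform precession is argued by ``factoring the precession out of $\Q^\top$,'' which as written is a sketch rather than a proof (constancy of $\om_3$ and of the vertical component of $\bn$ does not by itself certify that the residual rotation is about the \emph{fixed} axis $\bn(0)$ at a constant rate). But your own computations already close this gap without the structural claim: both velocities are constant multiples of $\Q^\top(\be_3\x\bg)=\bn\x\bg_{\rm fixed}$, a horizontal vector of constant length $\sin\vth_0$ which, by your cone argument, rotates uniformly at rate $\Omega$; integrating a uniformly rotating vector of constant magnitude gives a circle of radius (speed)$/\Omega$, together with the constant height $z_c$ you established at the outset. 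I would recommend replacing the ``factoring out'' paragraph by this one-line observation; with that change the proof is complete and matches the paper's conclusions, including the degenerations at $\vth_0\to0,\pi$ and $\vth_0\to\pi/2$ that you note at the end.
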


\begin{proof}

    Let us parameterize the rotation matrix $\Q$ by Euler's angles: the nutation angle $\vth$,
    the precession angle $\psi$ and the angle of proper rotation $\vfi$
    (we have already introduced two of the three angles above for parameterization of the
    vector $\bg$)~\cite{dtt}
    %\begin{widetext}
    {\small
    \begin{equation*}
    \Q\!=\!{\begin{pmatrix}
    \cos\psi\cos\vfi-\cos\vth\sin\psi\sin\vfi \!\!\!&\!\!\!\sin\psi\cos\vfi +\cos\vth\cos\psi\sin\vfi \!\!\!&\!\!\! \sin\vfi\sin\vth\\
    -\cos\psi\sin\vfi-\cos\vth\sin\psi\cos\vfi\q \!\!\!&\!\!\! -\sin\psi\sin\vfi +\cos\vth\cos\psi\cos\vfi\q\!\!\!&\!\!\! \cos\vfi\sin\vth\\
    \sin\vth\sin\psi \!\!&\!\! -\sin\vth\cos\psi\!\!&\!\! \cos\vth
    \end{pmatrix}}.
    \end{equation*}}
    %\end{widetext}

    The quadrature for angle $\psi$, with~\eqref{kin} and~\eqref{w12} taken into account, has
    the form
    \begin{equation}\dot\psi = -\frac{\kappa\cos\vth}{J(\vth)\sin^2\!\vth}.\label{dpsi}\end{equation}
    Thus, for the solution~\eqref{pr} the dependence $\psi(t)$ has the form
    \[\psi=\psi_0+\om_\psi t,\q \om_\psi=-\frac{\kappa\cos\vth_0}{J(\vth_0)\sin^2\!\vth_0}=\const,\]
    where $\psi_0$ is the initial value of angle $\psi$,
    which without loss of generality can be set equal to zero $\psi_0=0$.

    In accordance with~\eqref{pr}, angle $\vfi$ also changes linearly in time
    \[\vfi=\vfi_0+\om_\vfi t,\q \om_\vfi = \frac{\kappa}{J(\vth_0)\sin^2\!\vth_0}=\const,\]
    where the initial value of angle $\vfi_0$ will also be assumed to be zero.

    Substituting the resulting expressions into the quadratures~\eqref{kin} and integrating
    them over time, we obtain
    \[x_c=\rho_c\sin\psi,\q y_c = -\rho_c\cos\psi,\q z_c = \alpha\cos\vth_0+Z(\vth_0),\]
    where $\rho_c = Z(\vth_0)\tan\vth_0 + \alpha\sin\vth_0$.

   Knowing the trajectory of the center of mass $\br_c(t)$, we find the trajectory of the
   point of contact from the relation
  \begin{equation*}\br_p = \br_c + \Q\br,\label{rp}\end{equation*}
  where $\br_p=(x_p,y_p,0)$ is the radius vector of the point of contact referred to the
  fixed coordinate system.
  This yields
    \[x_p=x_c+(\br,\bal)=\rho_p\sin\psi,\q y_p=y_c+(\br,\bbt)=-\rho_p\cos\psi,\]
    where $\rho_p=\frac{\beta^2}{Z(\vth_0)}\tan\vth_0$.%\qed

\end{proof}

From the resulting expressions for the radius of the circle of the trajectory of the contact
point and the center of mass it can be seen that, as $\vth_0\rightarrow\pi/2$,
$\rho_c\rightarrow\infty$ and $\rho_p\rightarrow\infty$. That is, as the symmetry axis approaches
the horizontal position, the radius of the circle increases without bound.
Figure~\ref{pr3d} shows a schematic representation of permanent rotations of an
ellipsoid of revolution on a plane.

\begin{figure}[!ht]
\centering\includegraphics{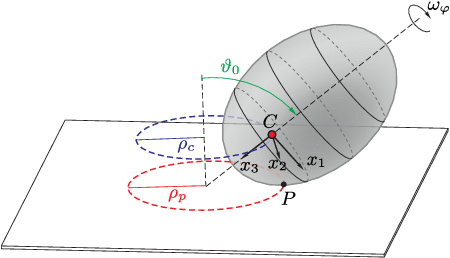}
\caption{Example of permanent rotations of an ellipsoid}
\label{pr3d}
\end{figure}

Note that, for fixed values of the parameters $\alpha$ and $\beta$, permanent rotations are
not possible for all inclination angles of the symmetry axis of the ellipsoid $\vth_0$.
Possible inclination angles are defined
by the inequality $\om_0^2(\vth)\geqslant0$. Figure~\ref{wvth} shows the regions on the
plane $(\beta^2,\vth)$ that are given by this inequality for a fixed value of $\alpha$.

\begin{figure}[!ht]
  \centering
  \includegraphics{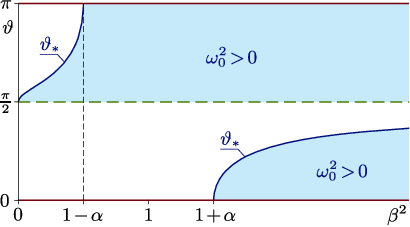}
  \caption{Regions of positive definiteness of the expression~\eqref{usl2} on the parameter
  plane $(\beta^2,\vth)$ for a fixed value of $\alpha$.}\label{wvth}
\end{figure}

\begin{rem}In~\cite{karapetyan1998families} conditions are found for
the parameters of an ellipsoid under which permanent rotations may occur at any point of the
ellipsoid's surface, and in~\cite{bizyaev2022permanent} the stability of such rotations is analyzed.\end{rem}

Analyzing the expression~\eqref{usl2} and Fig.~\ref{wvth}, we can formulate the following
statements.

\begin{utv}
  Permanent rotations under which the center of mass of the ellipsoid lies above its
  geometric center exist only for a fairly oblate ellipsoid $(\beta^2>1+\alpha)$.
\end{utv}

\begin{utv}
  Permanent rotations under which the center of mass of the ellipsoid lies below its geometric
  center exist regardless of the form of the ellipsoid. However, for a fairly
  prolate ellipsoid $(\beta^2<1-\alpha)$ such rotations are possible only for
  a fairly large inclination angle of its symmetry axis.
\end{utv}

In addition, it follows from the analysis of the expression~\eqref{usl2} and Fig.~\ref{wvth}
that there exist
inclined equilibrium positions of the ellipsoid. These equilibrium positions are
defined from the condition $\om_0^2=0$. The inclination angle for such
equilibrium positions depends on the parameters of the ellipsoid as follows:
\begin{equation}\vth_*=\arccos\frac{\alpha\beta}{\sqrt{(1-\beta^2)(1+\alpha^2-\beta^2)}}.\label{thz}\end{equation}
In Fig.~\ref{wvth}, these equilibrium positions correspond to the boundary of the region
$\om_0^2\geqslant0$. These positions exist only under some restrictions on the mass-geometric parameters of the ellipsoid. These restrictions have a simple mechanical interpretation.

\begin{utv}
  The inclined equilibrium positions of an ellipsoid of revolution with axisymmetric
  mass distribution exist only when its center of mass lies between the centers of
  curvature of its vertices.
\end{utv}

In the case $\alpha=0$ (without displacement of the center of mass), additional
symmetry leads to an increase in the critical set $S_1$.

\begin{pro}
  In the case $\alpha=0$ the critical subset $S_1$ of the integral map ${\rm\Phi}$ is a union
  of two subsets
  \begin{gather*}
    S_1=S_1^\alpha\cup S_1^0, \\
    S_1^0=\{(\bom,\bg(\vth,\vfi))\,|\,\bom=(0,0,\om_3),\,\vth=\frac{\pi}{2},\\\hspace*{5.3cm}\vfi\in[0,2\pi),\,\om_3\in \mathbb{R}\}.
  \end{gather*}
  %where $\om_3=\const$.
\end{pro}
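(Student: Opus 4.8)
The plan is to redo the rank computation from equation~\eqref{rank3} in the special case $\alpha=0$ and show that, besides the family $S_1^\alpha$ already found, a new branch appears. First I would substitute $\alpha=0$ into the condition that $\rank\frac{\partial(\mathcal{F}_0,\mathcal{F}_1,\mathcal{F}_2,\mathcal{E})}{\partial\bs z}\big|_{\mathcal{M}_4}<3$. Using the symmetry field $\bs\varsigma$ from~\eqref{vecfield}, it is natural to work in the adapted parameterization in which $\bom$ is decomposed along the frame $\{\bg,\,\be_3\x\bg,\,\bg\x(\be_3\x\bg)\}$ (this is exactly the frame used to write $S_1^\alpha$), so the constraint $\mathcal{F}_1=(\bom,\bg)=0$ kills the $\bg$-component and leaves two free components of $\bom$ plus the two angles $(\vth,\vfi)$ as coordinates on $\mathcal{M}_4$. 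The critical condition becomes the vanishing of a $3\times3$ Jacobian minor (equivalently a single polynomial equation after eliminating the Lagrange multipliers from $\mathrm d\mathcal{E}=\lambda_2\,\mathrm d\mathcal{F}_2$ modulo $\mathrm d\mathcal{F}_0,\mathrm d\mathcal{F}_1$); I expect this equation to factor, when $\alpha=0$, into the factor that reproduces~\eqref{usl2} times a new factor.

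Second, I would identify that new factor. The obvious guess, confirmed by the statement, is that at $\alpha=0$ the point of contact lies in the equatorial plane, i.e. $\br=-\beta^2\bg/Z(\vth)$ is purely "horizontal" relative to the body, and the choice $\vth=\pi/2$ (so $\gam_3=0$, meaning the symmetry axis $\be_3$ is horizontal) together with $\bom=(0,0,\om_3)$ — pure spin about the now-horizontal symmetry axis — makes $\J\bom\parallel\bg$? No: more carefully, one checks directly that with $\alpha=0$, $\vth=\pi/2$, $\bom=\om_3\be_3$, equation~\eqref{tmp2} is satisfied for every value of $\om_3$ (the gyroscopic term $\bom\x\J\bom$ vanishes since $\bom\parallel\be_3$ is an eigenvector of $\J$, the term $m\br\x(\bom\x\dot\br)$ vanishes because $\dot\br=0$ along this motion, $m\g\bg\x\br=0$ since $\br\parallel\bg$, and $\mu=0$), and $\dot\bg=\bg\x\bom=0$; hence this is a one-parameter family (in $\om_3$, with $\vfi$ a further trivial parameter from the axial symmetry) of relative equilibria of~\eqref{eqred}, i.e. it lies in $S_1$. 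I would then verify that $\mathrm d\mathcal{F}_2$ and $\mathrm d\mathcal{E}$ are proportional (and nonzero) along it, so the rank is exactly $2$, not $0$, placing it in $S_1$ and not $S_0$; this is a short check since on $\vth=\pi/2$ one has $J=\sqrt{i_3+m\beta^4}=\const$, so $\mathcal{F}_2=\const\cdot\om_3$ and $\mathcal{E}=\tfrac12(\const)\om_3^2$ up to the constant potential term, whence $\mathrm d\mathcal{E}\parallel\mathrm d\mathcal{F}_2$ identically. Finally I would argue completeness: any critical point with $\alpha=0$ either satisfies~\eqref{usl2} (and so lies in $S_1^\alpha$, whose defining relations do not single out $\alpha=0$) or forces the extra factor to vanish, which — after checking that the extra factor's zero set, intersected with $\mathcal{M}_4$ and with the rank-$2$ (rather than rank-$\le1$) stratum, is precisely the described set — gives $S_1^0$; hence $S_1=S_1^\alpha\cup S_1^0$.

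The main obstacle I anticipate is bookkeeping in the elimination of the undetermined multipliers and showing the critical polynomial factors cleanly: the expression $J(\vth)$ and $B(\vth)=i_1+m\br^2$ carry square roots through $Z(\vth)$, and setting $\alpha=0$ must be done carefully so that one does not divide by a quantity that itself vanishes (e.g. $\cos\vth$, which appears in the denominator of~\eqref{usl2} and whose vanishing is exactly the new branch). A clean way around this is not to factor a messy rational expression but to argue structurally: the $\alpha=0$ system has the extra discrete symmetry $\bg\mapsto R\bg$, $\bom\mapsto R\bom$ for the reflection $R$ fixing the equatorial plane (since $U$ and $\J$ depend on $\gam_3$ only through $\gam_3^2$ when $\alpha=0$), whose fixed-point set in $\mathcal{M}_4$ is exactly $\{\vth=\pi/2,\ \om_1=\om_2=0\}$ (after reducing by $\bs\varsigma$); relative equilibria of the reduced one-degree-of-freedom system~\eqref{eqsys} on this invariant line are automatically critical points of $\Phi$, and from~\eqref{eqsys} with $\alpha=0$ one sees $\vth=\pi/2$ is always an equilibrium (the bracket $G(\pi/2)$ vanishes there since $\cot(\pi/2)=0$, $\cos(\pi/2)=0$, $B'(\pi/2)=0$), giving the family for all admissible $\om_3$. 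Pairing this symmetry-based existence argument with the rank-$2$ check above yields both inclusions without grinding through the resultant, and I would present the factorization only as the verification that no \emph{further} components appear.
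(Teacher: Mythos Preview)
Your plan matches the paper's own (very terse) treatment: the paper simply notes that at $\alpha=0$ ``additional symmetry leads to an increase in the critical set $S_1$'' and states the result as following from~\eqref{rank3}; your factorization via the reduced system~\eqref{eqsys} (where $G_0(\vth)$ acquires the factor $\cos\vth$ when $\alpha=0$) is precisely the clean way to make that explicit, and your reflection symmetry $\gam_3\mapsto-\gam_3$ is the ``additional symmetry'' in question.

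Two computational slips in the direct verification, neither fatal. First, along $S_1^0$ one has $\dot\bg=\bg\times\bom=(\gam_2\om_3,-\gam_1\om_3,0)\neq\bs0$, so these are not fixed points of~\eqref{eqred} but genuine periodic orbits (the ellipsoid spins about its horizontal symmetry axis, $\vfi$ advances linearly, $\gam_3\equiv0$); correspondingly $\dot\br\neq\bs0$. The term $m\br\times(\bom\times\dot\br)$ still vanishes, but because $\bom\times\dot\br$ comes out parallel to $\bg$ (hence to $\br$), not because $\dot\br=\bs0$. Second, your rank bookkeeping is off by one: on $S_1$ the intrinsic map has $\rank\mathrm d\Phi=1$ (equivalently the redundant Jacobian in~\eqref{rank3} has rank~$3$), not $2$; your check that $\mathrm d\mathcal F_2\neq0$ is the correct way to exclude $S_0$, just phrase the conclusion as $\rank\mathrm d\Phi=1$.
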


The critical subset $S_1^0$ is filled by periodic trajectories
\begin{equation}\om_3=\om_0=\const,\q \vth=\frac\pi2,\q \vfi=\vfi_0+\om_0 t.\label{prpi2}\end{equation}
In the reduced system~\eqref{eqsys} all solutions~\eqref{prpi2} correspond to
one fixed point
\[\vth=\frac\pi2,\q p_\vth=0.\]
In the fixed coordinate system, the solutions~\eqref{prpi2} correspond to the rolling along the ellipsoid's equator in a straight line with an arbitrary velocity.

\subsubsection{The critical subset $S_0$}

We now consider the critical subset $S_0$. Solving the system of equations given by
equation~\eqref{rank2}, we can prove the following proposition.

\begin{pro}
  The critical subset $S_0$ of the integral map $\Phi$ consists of two points
    \[S_0=S_0^\pm=\{(\bom,\bg)\,|\,\bom=0,\,\bg=(0,0,\pm1)\}.\]
  \label{pro3}
  \end{pro}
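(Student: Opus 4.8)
The plan is to solve the rank condition~\eqref{rank2} directly in the redundant variables $\bs z=(\bom,\bg)$, without passing to the singular angle coordinates. At every point of $\M_4$ the covectors $\mathrm{d}\mathcal{F}_0=2(0,0,0,\gam_1,\gam_2,\gam_3)$ and $\mathrm{d}\mathcal{F}_1=(\gam_1,\gam_2,\gam_3,\om_1,\om_2,\om_3)$ are linearly independent (indeed $\mathrm{d}\mathcal{F}_1$ has $\bom$-block $\bg\ne0$ while that of $\mathrm{d}\mathcal{F}_0$ vanishes, so neither is a multiple of the other), so condition~\eqref{rank2} is equivalent to requiring that \emph{both} $\mathrm{d}\mathcal{F}_2$ and $\mathrm{d}\mathcal{E}$ lie in $\mathrm{span}(\mathrm{d}\mathcal{F}_0,\mathrm{d}\mathcal{F}_1)$. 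I would therefore write $\mathrm{d}\mathcal{F}_2=a\,\mathrm{d}\mathcal{F}_0+b\,\mathrm{d}\mathcal{F}_1$ and $\mathrm{d}\mathcal{E}=c\,\mathrm{d}\mathcal{F}_0+d\,\mathrm{d}\mathcal{F}_1$, which is precisely the undetermined--multiplier procedure referred to after~\eqref{rank3}.

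The first step uses the $\mathrm{d}\mathcal{F}_2$ equation alone and already localizes $\bg$. Since $\mathcal{F}_2=J(\gam_3)\om_3$ with $J$ independent of $\bom$, the $\bom$-block of $\mathrm{d}\mathcal{F}_2$ is $(0,0,J(\gam_3))$, while the $\bom$-block of $a\,\mathrm{d}\mathcal{F}_0+b\,\mathrm{d}\mathcal{F}_1$ is $b(\gam_1,\gam_2,\gam_3)$. Matching gives $b\gam_3=J(\gam_3)$ and $b\gam_1=b\gam_2=0$; as $J(\gam_3)>0$ on $[0,\pi]$ this forces $b\ne0$, hence $\gam_3\ne0$ and $\gam_1=\gam_2=0$. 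On $\M_4$ this means $\bg=(0,0,\pm1)$, so $S_0$ is contained in the two fibres over the poles.

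The second step pins down $\bom$ there. At $\bg=(0,0,\pm1)$ the constraint $\mathcal{F}_1=(\bom,\bg)=0$ gives $\om_3=0$ (so also $\mathcal{F}_2=0$, $\kappa=0$). In the $\mathrm{d}\mathcal{E}$ equation, the $\mathrm{d}\om_1$- and $\mathrm{d}\om_2$-components of $\mathrm{d}\mathcal{E}$ are the first two entries of $\J\bom$; at the pole $\J$ is diagonal with $\om_1$- and $\om_2$-entries equal to $i_1+m(b_3+a)^2>0$, whereas the corresponding components of $c\,\mathrm{d}\mathcal{F}_0+d\,\mathrm{d}\mathcal{F}_1$ are $d\gam_1=d\gam_2=0$. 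Hence $\om_1=\om_2=0$, i.e.\ $\bom=0$, which proves $S_0\subseteq S_0^\pm$. The reverse inclusion is a short direct check at $\bom=0$, $\bg=(0,0,\pm1)$: the kinetic part of $\mathcal{E}$ is quadratic in $\bom$ and contributes nothing to $\mathrm{d}\mathcal{E}$, so $\mathrm{d}\mathcal{E}=-m\g\,\bigl(\d(\br,\bg)/\d\bg\bigr)\mathrm{d}\bg=-m\g\,\br\cdot\mathrm{d}\bg$, using the identity $\d(\br,\bg)/\d\bg=\br$ (a consequence of $(\mathrm{d}\br,\bg)=0$, the same relation tying $\chi_1$ to $\chi_2$ in Section~\ref{sec2}); since $\br=-(b_3+a)\be_3$ is collinear with $\bg$ at the poles, $\mathrm{d}\mathcal{E}$ is proportional to $\mathrm{d}\mathcal{F}_0$, while $\mathrm{d}\mathcal{F}_2=J(\pm1)\,\mathrm{d}\om_3$ is a multiple of $\mathrm{d}\mathcal{F}_1$. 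Thus $\rank\mathrm{d}\Phi=2$ at both points and $S_0=S_0^\pm$.

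I expect the only real delicacy to be bookkeeping rather than computation: the handy local coordinates $(\vth,\vfi,\om_1,\om_2)$ degenerate exactly at $\bg=(0,0,\pm1)$, so everything must stay in the redundant variables, and one must keep in mind that~\eqref{rank2} refers to the full $4\x6$ Jacobian evaluated on $\M_4$, with $\mathcal{F}_0,\mathcal{F}_1$ always free to absorb rank $2$. An essentially equivalent, marginally longer route is to use that $S_0$ consists of equilibria of~\eqref{eqred}: $\dot\bg=\bg\x\bom=0$ together with $(\bom,\bg)=0$ gives $\bom=0$; the $\bom$-equation of~\eqref{eqred} then reduces to $m\g\,\bg\x\br=\mu\bg$, and taking the scalar product with $\bg\x\br$ (which is orthogonal to $\bg$) yields $\bg\x\br=0$; for the ellipsoid $\bg\x\br=-\bigl(a+(b_3^2-b_1^2)\gam_3/\sqrt{(\bg,\B\bg)}\bigr)(\gam_2,-\gam_1,0)$, so the only equilibria with $\gam_1^2+\gam_2^2\ne0$ are the inclined positions, which are exactly the $\om_0=0$ members of the family $S_1^\alpha$ found above, where $\rank\mathrm{d}\Phi=1$; this again leaves $S_0=S_0^\pm$.
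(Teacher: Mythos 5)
Your proof is correct and follows essentially the route the paper indicates but does not write out: the undetermined-multiplier/rank computation for condition~\eqref{rank2} carried out in the redundant coordinates $(\bom,\bg)$, with the $\mathcal{F}_2$-equation forcing $\bg=(0,0,\pm1)$ and the $\mathcal{E}$-equation then forcing $\bom=0$. The only blemish is cosmetic: at the lower pole $\bg=(0,0,-1)$ the relevant diagonal entry of $\J$ is $i_1+m(b_3-a)^2$ rather than $i_1+m(b_3+a)^2$, but since only its positivity is used the argument is unaffected.
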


  The critical subset $S_0$ is filled by fixed points of the system~\eqref{eqred}.
  These points correspond to the equilibrium positions of the ellipsoid in which
  it rests on one of its vertices.
  In Fig.~\ref{wvth}, the fixed points $S_0$ correspond to the horizontal straight lines $\vth=0$ and $\vth=\pi$.

  \begin{rem}
The fixed points of the complete system~\eqref{eqred} $S_0$ correspond to fixed points
of the reduced system~\eqref{eqsys}. This is due to the fact that such equilibrium positions
lie on singular zero-dimensional orbits of the action of the rotation group~\eqref{vecfield}.
\end{rem}

\subsection{Example of a bifurcation diagram}

Consider an example of constructing a bifurcation diagram which is an image of the
critical set $S$. We present the explicit form of bifurcation curves and points
corresponding to the image ${\rm\Phi}(S)$:
\begin{enumerate}
\item[--] the image of the subset $S_1^\alpha$ consists of curves on the plane of first
integrals $(\kappa,\eps)$, which can be represented as
\begin{equation}
%\begin{aligned}
\sigma_\vth : \left\{
    \begin{aligned}
    &\eps = \frac{3Z(\vth_0)^2-1}{2Z(\vth_0)} + \alpha\frac{3\cos^2\!\vth_0-1}{2\cos\vth_0},\\
    &\kappa= \pm\sin^2\!\vth_0\sqrt{\frac{\beta^2-1}{Z(\vth_0)} - \frac{\alpha }{\cos\vth_0}},
    \end{aligned}
    \right.%\\
    %\end{aligned}
    \label{s1}\end{equation}
    where
    \[\begin{aligned}
    &\vth_0\in(\pi/2,\vth_*)\q\text{for}\q \beta^2<1-\alpha,\\
    &\vth_0\in(\pi/2,\pi)\q\text{for}\q 1-\alpha<\beta^2<1+\alpha,\\
    &\vth_0\in(0,\vth_*]\cup(\pi/2,\pi)\q\text{for}\q \beta^2>1+\alpha;
    \end{aligned}\]
\item[--] the image of the subset $S_1^0$ is a parabola
\begin{equation}\sigma_{\pi/2} : \,
    \eps = \frac{\kappa^2}{2}+\beta;
    \label{s2}\end{equation}
\item[--] the image of the subset $S_0$ on the plane of first integrals $(\kappa,\eps)$
consists of a pair of point
\begin{equation}\sigma_0\,:\,(0,1+\alpha),\qq \sigma_\pi\,:\,(0,1-\alpha).\label{s0}\end{equation}
Depending on the values of the parameters $\alpha$ and $\beta$, these points can be isolated.
It can be seen from Fig.~\ref{wvth} that the point $\sigma_0$ is isolated when
$\beta^2<1+\alpha$, and the point $\sigma_\pi$, when $\beta^2<1-\alpha$.
We note that in the case $\alpha=0$ these points lie
on the same level set
of the integral $\kappa=0$, $\eps=1$, but do not coincide in $\mathcal{M}_6$.
\end{enumerate}

Figure~\ref{bifex} shows an example of constructing
a bifurcation diagram on the plane of first integrals $(\kappa,\eps)$ for the general case
$\alpha\neq0$.
This example of a bifurcation diagram is given for the case
of an ``oblate'' ellipsoid $\beta^2>1$ with a relatively small displacement of the center of
mass.
In this section, we explain the notation used in the diagram, and in the following sections
we will use them without repeating our explanations.

\begin{figure}[!ht]
\centering\includegraphics{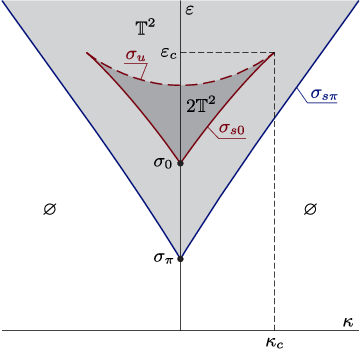}
\caption{Example of a bifurcation diagram. This type of diagram is characteristic of
an ``oblate'' ellipsoid with a fairly small displacement of the center of mass
$\alpha$.\label{bifex}}
\end{figure}

In the diagram, gray denotes the region of possible motions of the system.
In the light gray region, the integral manifold is two-dimensional torus $\mathbb{T}^2$.
Dark gray denotes the region which
corresponds to two connectedness components of the
integral manifold (two two-dimensional tori $2\mathbb{T}^2$). These two connectedness
components are glued together to each other along the curve $\sigma_u$. In the white region,
no motions of the system are possible ($\varnothing$).

The heavy solid and dotted lines in the diagram show parts of the
bifurcation curves~\eqref{s1} (and~\eqref{s2} for the case $\alpha=0$). The solid
curves correspond to stable fixed points, and the dotted curves indicate unstable ones.
The curve~\eqref{s1} is partitioned by critical values of
angle $\vth_0$ into separate segments with the same type of
stability, which are denoted in the diagram by different symbols.  The curve $\sigma_{s\pi}$
corresponds to
$\vth_0\in(\pi/2,\pi)$,
the curve $\sigma_{s0}$ corresponds to $\vth_0\in(0,\vth_c)$, and the
curve $\sigma_{u}$, to $\vth_0\in(\vth_c,\vth_*]$.
 Here $\vth_c$ is the critical value of angle $\vth_0$. At this value, $\vth_0=\vth_c$ (which corresponds to the point
$(\kappa_c,\eps_c)$ in the diagram), one can observe a change in the type of stability
of fixed points corresponding to the curve
$\sigma_\vth$. The fixed points~\eqref{s0} corresponding to equilibrium  positions are
shown as colored black points (in the case of their stability) or punctured points
(in the case of their instability).

%The type of the diagram is determined by the critical points described above.
As the parameters of the system $\alpha$, $\beta$ change, the critical subsets $S_1$ and $S_0$
can change, resulting in a qualitative change in the type of the diagram. The
bifurcations of critical points described above occur simultaneously with this change. In order to determine possible types of
bifurcation diagrams, we investigate the bifurcations of critical points in what follows.

\subsection{Bifurcations of critical sets and types of~bifurcation diagrams}
\subsubsection{Bifurcations of the critical subset $S_0$}

Consider the bifurcations of the critical subset $S_0$.

When $\beta^2<1-\alpha$ (see Fig.~\ref{wvth}), the subset $S_0$ consists of two isolated points.
When $\beta^2=1-\alpha$, a bifurcation occurs, resulting in the component $S_0^-$ merging with
the component $S_1^\alpha$.

Next, when $1-\alpha<\beta^2<1+\alpha$, one of the points of the subset $S_0$ is isolated
($S_0^+$) and the other is not isolated ($S_0^-$).

When $\beta^2=1+\alpha$, a second bifurcation occurs during which
the component $S_1^\alpha$ is born from $S_0^+$, and both points of the subset $S_0$ are not
isolated now.

We note that, simultaneously with the bifurcations of critical sets described above,
changes in the
stability of the fixed points occur. To illustrate this fact, we analyze the
linear stability of the fixed points
\begin{equation}
\vth=0,\q p_\vth=0\qq \text{and}\qq \vth=\pi,\q p_\vth=0
\label{pr0}
\end{equation}
of the reduced system~\eqref{eqsys}.

To do so, we represent the system~\eqref{eqsys} as
\[\dot{\bs q}={\bf f}(\bs q),\]
where $\bs q = (\vth,p_\vth)$, ${\bf f}(\bs q)$ is the vector function whose elements depend
on the components of $\bs q$. Introducing the perturbation vector $\bs{\tilde q}=\bs q - \bs q_0$,
where $\bs q_0 = (\vth_0, 0)$ is the equilibrium position, and representing the equation for
$\bs{\tilde q}$ as a series expansion near the equilibrium position, we obtain
%\[\dot{\bs{\tilde{\bs q}}} = {\bf \Lambda}\bs{\tilde q},\q {\bf \Lambda} = \left.\frac{\partial {\bf f}(\bs q)}{\partial \bs q}\right|_{\bs q = \bs q_0}.\]
\[\dot{\tilde{\bs q}} = {\bf \Lambda}\bs{\tilde q},\q {\bf \Lambda} = \left.\frac{\partial {\bf f}(\bs q)}{\partial \bs q}\right|_{\bs q = \bs q_0}.\]
The matrix $\bf \Lambda$ has the form
\[{\bf \Lambda} = \begin{pmatrix}
                    0 & 1 \\
                    \frac{G_0'(\vth_0)}{B(\vth_0)} & 0
                  \end{pmatrix},\]
where the notation $G_0(\vth_0)=\left.G(\vth_0)\right|_{p_\vth=0}$ is used, i.e.,
$G_0'(\vth_0)$ denotes the derivative of the function $G(\vth)$ with respect to $\vth$,
calculated at the point $\vth_0$ for $p_\vth=0$.

We write the characteristic equation
\[\det({\bf \Lambda - \lambda\E_2})=0,\]
where $\lambda$ are the eigenvalues of the matrix $\bf \Lambda$ and $\E_2$ is
the $2\x2$ identity matrix. This equation
reduces to the simple quadratic equation
\[\lambda^2-\frac{G_0'(\vth_0)}{B(\vth_0)}=0.\]

Since $B(\vth_0)>0$ for any values of $\vth_0$, the eigenvalues of $\lm$ are defined by
the value of $G_0'(\vth_0)$. If $G_0'(\vth_0)<0$, the corresponding fixed point is a stable
center-type point. If $G_0'(\vth_0)>0$, then the corresponding fixed point is an unstable
saddle point.

For the lower equilibrium position ($\theta_0=\pi$, $\kappa=0$) the value of $G_0'(\vth_0)$ is negative
for $\beta^2>1-\alpha$, and for the upper equilibrium position ($\theta_0=0$, $\kappa=0$) it is negative for $\beta^2>1+\alpha$.
Thus, depending on the type of stability of the fixed points~\eqref{pr0}, the parameter plane
$(\alpha,\beta^2)$ can be divided into three regions:
\begin{enumerate}
  \item[(a)] $\beta^2<1-\alpha$. In this region, both fixed points $\sigma_0$, $\sigma_\pi$
  are unstable and isolated;
  \item[(b)] $1-\alpha<\beta^2<1+\alpha$. In this region, the lower position of the ellipsoid
  $\sigma_\pi$ is stable and is nonisolated, and the upper one, $\sigma_0$, is unstable and
  isolated;
  \item[(c)] $\beta^2>1-\alpha$. In this region, both fixed points are stable $\sigma_0$,
  $\sigma_\pi$ and nonisolated.
\end{enumerate}

\begin{rem}
We note that the resulting boundaries of the stability of the solutions $\sigma_0$ and
$\sigma_\pi$ are defined only by the geometry of the body and the position of the center of
mass. For the stability of the vertical position of the ellipsoid it is necessary
that its center of mass ($1\pm\alpha$) be below the center of curvature ($\beta^2$)
of the ellipsoid's surface at the point of contact. The center of curvature in this case
is correctly determined because the body is axisymmetric and the principal radii of curvature
at the vertices of the ellipsoid are the same.
This statement is in agreement with the results obtained in the book~\cite{markeev1992dynamics} for
an arbitrary body of revolution and in our recent paper~\cite{kilin2023stability} on the dynamics of an ellipsoid on
a vibrating plane.
\end{rem}

\subsubsection{Bifurcations of critical subsets $S_1$}

We now turn our attention to bifurcations of the critical subset $S_1$, which
corresponds to the fixed points~\eqref{prth} of the reduced system~\eqref{eqsys}. The
topological type of the subset $S_1$ is uniquely related to the type of the bifurcation
curve~\eqref{s1}. The type of bifurcation
curve is determined, in its turn, by its critical points, which satisfy the
system of equations
\begin{equation}G_0(\vth)=0,\q\dfrac{\partial G_0(\vth)}{\partial\vth}=0.\label{eq2}\end{equation}
In the bifurcation diagram, these critical points correspond
to the cusp points and tangency points of
the bifurcation curves; also, the bifurcation curves can lose smoothness at these points.

Let us analyze the bifurcations of the critical points depending on the values of the
parameters $\alpha$ and $\beta$.

The emergence of new critical points is defined by the additional condition
\begin{equation}\dfrac{\partial^2 G_0(\vth)}{\partial\vth^2}=0.\label{eq3}\end{equation}
The common solution of the system of equations~\eqref{eq2}--\eqref{eq3}
defines the curve on the plane $(\alpha,\beta^2)$ which
separates regions with different numbers of critical points and hence
with different types of bifurcation diagrams.

In the case $\alpha\neq0$, the system of equations~\eqref{eq2}--\eqref{eq3}  gives two solutions:
\begin{enumerate}
  \item $\beta^2=1+\alpha$. This straight line on the plane $(\alpha,\beta^2)$ separates
  the regions with one cusp point (with $\beta^2>1+\alpha$) and without cusp points (with
  $\beta^2<1+\alpha$). In addition, as shown in the preceding section,
  when this straight line is intersected, a bifurcation curve $\sigma_\vth$ (branches $\sigma_{s0}$
  and $\sigma_u$) is born from point $\sigma_0$, and $\sigma_0$ ceases to be isolated.
  \item $\beta^2=1-\alpha$.  This straight line on the plane $(\alpha,\beta^2)$ corresponds
  to the loss of smoothness of the bifurcation curve $\sigma_{s\pi}$ with $\kappa=0$,
      in which case the point $\sigma_\pi$ descends  to the
      bifurcation curve $\sigma_{s\pi}$ and ceases to be isolated.
\end{enumerate}

As we see, the bifurcations of $S_1$ listed above occur at the same time as those of $S_0$.
This system has no other bifurcations of $S_1$ which do not affect $S_0$.

As was shown above, when $\alpha=0$ the subset $S_1$ bifurcates. Next, we consider the question
of bifurcations of $S_1$ for constant $\alpha=0$ and for the varying parameter $\beta$.
In the case $\alpha=0$ the system of equations~\eqref{eq2}--\eqref{eq3} has a unique
solution $\beta^2=1$. For $\beta^2<1$ there exists only one curve
$\sigma_{\pi/2}$ on the plane of first integrals, and the fixed points $\sigma_0$ and $\sigma_\pi$
are isolated. When $\beta^2=1$, bifurcation curves
$\sigma_{s0}$ and $\sigma_{s\pi}$ are born from the points
$\sigma_0$ and $\sigma_\pi$, respectively. The points
$\sigma_0$ and $\sigma_\pi$ cease to be isolated, and on the
curve $\sigma_{\pi/2}$ tangency points are born at which the
curves  $\sigma_{s0}$ and $\sigma_{s\pi}$ are tangent to the
curve $\sigma_{\pi/2}$.

Simultaneously, the stability of the fixed
point corresponding to the
curve $\sigma_{\pi/2}$ changes. Indeed, following the
above-mentioned algorithm of the
linear stability analysis of fixed points, the condition
for stability is defined by the sign $G_0'(\pi/2)$,
whence we find that the point~\eqref{prpi2} is stable
for
\begin{equation}|\kappa|<\kappa_c=\sqrt{\frac{\beta^2-1}{\beta}}.\label{a0usl}\end{equation}
From this, we obtain the critical value of the parameter
\[\beta^2=1.\]
Thus, when $\beta^2<1$ the fixed point \[\vth=\frac\pi2,\q p_\vth=0\] is stable
for any values of $\kappa$, and for $\beta^2>1$ it is stable
only when condition~\eqref{a0usl} is satisfied.

\subsubsection{Types of bifurcation diagrams}
Summarizing, we note that the entire parameter space $(\alpha,\beta^2)$ can be divided into
five regions corresponding to different types of bifurcation diagrams (see Fig.~\ref{ab}).
Figure~\ref{bif} shows all possible types of bifurcation diagrams for the problem under
consideration. Diagrams (a)--(c) correspond to the general case $\alpha>0$ and
have been constructed for the fixed value $\alpha=0.5$ and different values of $\beta$, and
diagrams (c) and (d), for the case $\alpha=0$.

\begin{figure}[!ht]
  \centering
  \includegraphics{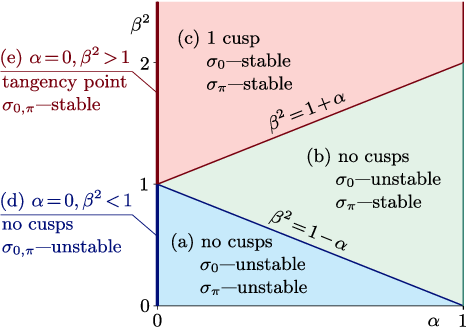}
  \caption{Regions with different types of bifurcation diagrams on the parameter plane $(\alpha,\beta^2)$.\label{ab}}
\end{figure}

\begin{figure}[!ht]
  \centering
  \includegraphics[width=\textwidth]{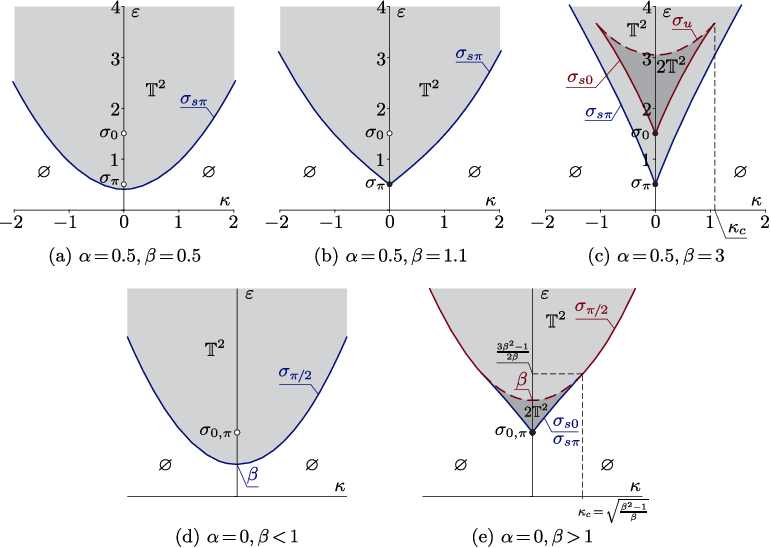}
  \caption{Possible types of bifurcation diagrams for (a)--(c) $\alpha=0.5$ and (d), (e) $\alpha=0$.
  \label{bif}}
\end{figure}

Next, we consider in more detail each of the types of diagrams. In the names of the types of
diagrams (a)--(e) we refer to Fig.~\ref{bif}. In addition, for each type of diagrams we
construct all possible types of phase portraits of the reduced system~\eqref{eqsys}
on different level sets of the integral $\mathcal{F}_2$.

\subsubsection{Bifurcation diagram (a)}

Figure~\ref{bifa} shows a bifurcation diagram and
the corresponding  phase portraits for the case where the bifurcation diagram has no cusp points
for $\alpha\neq0$. As can be seen from this figure, both points $\sigma_0$ and $\sigma_\pi$
do not lie on the bifurcation curve $\sigma_\vth$ and are
isolated.

\begin{figure}[!ht]
  \centering
  \includegraphics[width=\textwidth]{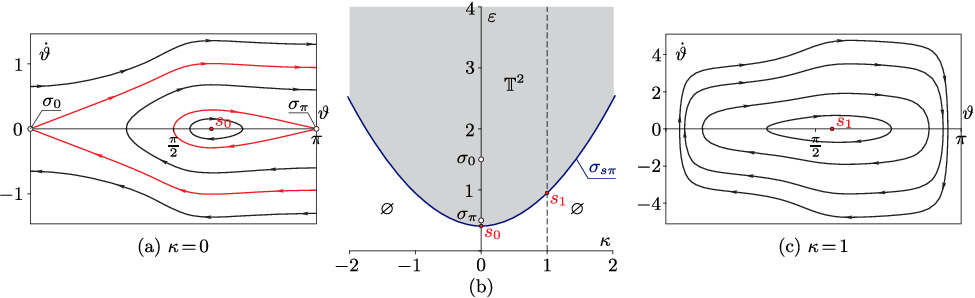}
  \caption{Bifurcation diagram (b) and examples of
  phase portraits (a), (c) corresponding to different values of the integral $\kappa$
  for $\alpha=0.5$, $\beta=0.5$.\label{bifa}}
\end{figure}

For this case, two types of qualitatively different
phase portraits are possible: for $\kappa=0$ and for $|\kappa|>0$. In Fig.~\ref{bifa},
the points $s_0$ and $s_1$ denote fixed points of the family $\sigma_\vth$ which lie on
the chosen level sets of the first integral $\kappa=0$ and $\kappa=1$,
respectively.

As can be seen from Fig.~\ref{bifa}a, for the case $\kappa=0$ the phase trajectories pass
through the poles $\vth=0$ and $\vth=\pi$. For a correct passage through the pole, it is
necessary to glue together the trajectories in the phase portrait:
$\dot\vth\rightarrow-\dot\vth$. Thus, in this case each level set of the first integrals
corresponds to one connectedness component, which is in complete agreement with the
bifurcation diagram.

\subsubsection{Bifurcation diagram (b)}

As the parameter $\beta$ increases, the curve $\sigma_{s\pi}$ approaches the point
$\sigma_\pi$. When $\beta^2=1-\alpha$, the point $\sigma_\pi$ descends to the bifurcation
curve $\sigma_{s\pi}$. Also, $\sigma_\pi$ ceases to be isolated, and the branch $\sigma_{s\pi}$
loses smoothness at this point ($\kappa=0$). The corresponding type of diagram and
the phase portraits are presented in Fig.~\ref{bifb}.

\begin{rem}
A similar bifurcation, not when the parameter, but the level set of the
integrals is changed, can be observed in the classical problem of the Lagrange top. The corresponding
conditions for stability of a sleeping top are called the Maievsky condition~\cite{dtt,markeev1992dynamics}.
\end{rem}

\begin{figure}[!ht]
  \centering
  \includegraphics[width=\textwidth]{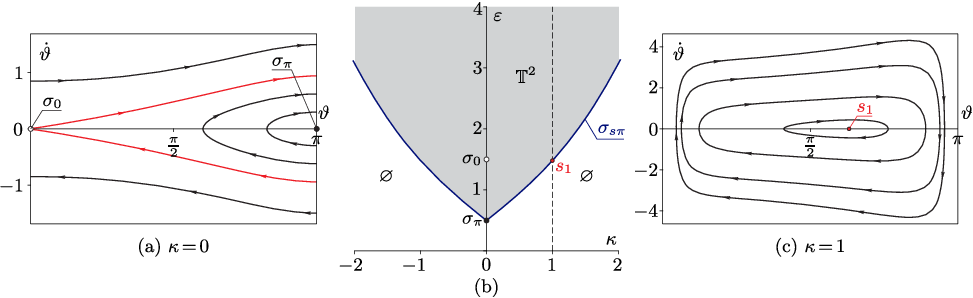}
  \caption{Bifurcation diagram (b) and examples of
  phase portraits (a), (c) corresponding to different values of the integral $\kappa$ for $\alpha=0.5$, $\beta=1.1$.\label{bifb}}
\end{figure}

For this case also, only two types of
qualitatively different phase portraits similar to diagram (a) are possible. The only
difference is that the point $\sigma_\pi$ is stable for $\kappa=0$.

\subsubsection{Bifurcation diagram (c)}

Further increase in the parameter $\beta$, with $\beta^2=1+\alpha$, leads to a new bifurcation.
The second isolated point $\sigma_0$ gives rise to bifurcation branches $\sigma_{s0}$ and
$\sigma_u$. The corresponding diagram and the phase portraits are presented in Fig.~\ref{php}.

\begin{figure}[!ht]
%\onecolumn
  \centering
  \includegraphics[width=\textwidth]{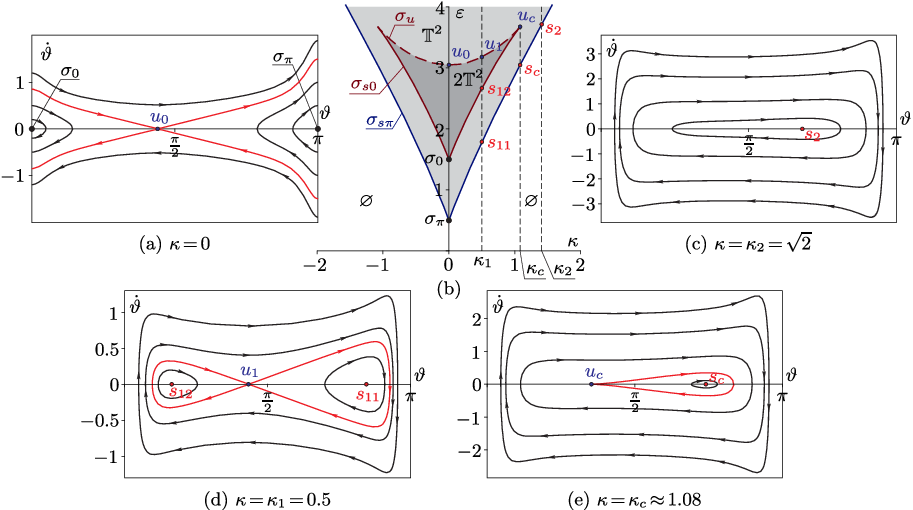}
  \caption{Bifurcation diagram (b) and examples of
  phase portraits (a), (c)--(e) corresponding to different values of the integral $\kappa$
  for $\alpha=0.5$, $\beta=3$.\label{php}}
  %\twocolumn
\end{figure}

As can be seen from the figure, in this case there exist four different types of
phase portraits. When $\kappa=0$ and $0<|\kappa|<\kappa_c$ (Figs.~\ref{php}a, \ref{php}d)
the phase portrait has three fixed points: two of them are stable ($\sigma_0$, $\sigma_\pi$
for $\kappa=0$ and $s_{11}$,
$s_{12}$ for $\kappa=\kappa_1$) and one is unstable ($u_0$ and $u_1$, respectively).
When $\kappa=\kappa_c$ (Fig.~\ref{php}e) the curves $\sigma_u$ and $\sigma_{s0}$ merge
at the point $u_c$. In this case, the phase portrait exhibits a saddle-node
bifurcation --- the stable point and the unstable point merge and disappear.
When $|\kappa|>\kappa_c$ (Fig.~\ref{php}c), only one fixed point remains in the phase
portrait.

\goodbreak

%\newpage
\subsubsection{Bifurcation diagram (d)}

In the case $\alpha=0$, $\beta<1$ the bifurcation diagram consists of
the curve $\sigma_{\pi/2}$, which corresponds to rolling along the ellipsoid's equator, and of
two isolated fixed points $\sigma_0$ and $\sigma_\pi$, which lie on the same level set of the
integrals.

\begin{figure}[!ht]
%\onecolumn
  \centering
  \includegraphics[width=\textwidth]{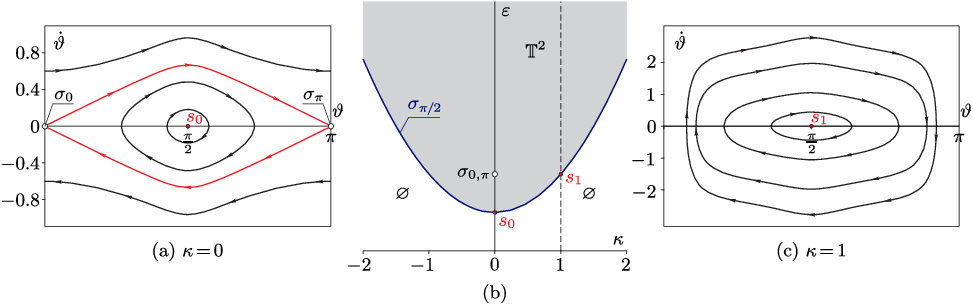}
  \caption{Bifurcation diagram (b) and examples of
  phase portraits (a), (c) corresponding to different values of the integral $\kappa$ for
  $\alpha=0$, $\beta=0.5$.\label{bifd}}
  %\twocolumn
\end{figure}

The phase portrait has, for any value of the integral $\kappa$, a fixed point for
$\theta=\pi/2$, and is symmetric about this point.
For $\beta<1$, two qualitatively different types of phase portrait are possible
(see Fig.~\ref{bifd}). When $\kappa=0$, the phase plane has three fixed points: two
unstable ones, $\sigma_0$ and $\sigma_\pi$, and a stable fixed point, $s_0$ (which belongs to
the family $\sigma_{\pi/2}$). When $\kappa>0$, only one stable fixed point, $s_1$,
which belongs to the family $\sigma_{\pi/2}$, remains in the phase portrait.

\subsubsection{Bifurcation diagram (e)}

When $\beta=1$ (with constant $\alpha=0$), the fixed points $\sigma_0$ and $\sigma_\pi$
corresponding to the vertical positions of the ellipsoid descend to the bifurcation curve $\sigma_{\pi/2}$ and give rise to bifurcation branches
$\sigma_{s0}$ and $\sigma_{s\pi}$, respectively. In this case, they become
stable and cease to be isolated (see Fig.~\ref{bife}). %Also, the fixed point $\vth=\pi/2$ is
unstable for $\kappa=0$.

\begin{figure}[!ht]
%\onecolumn
  \centering
  \includegraphics[width=\textwidth]{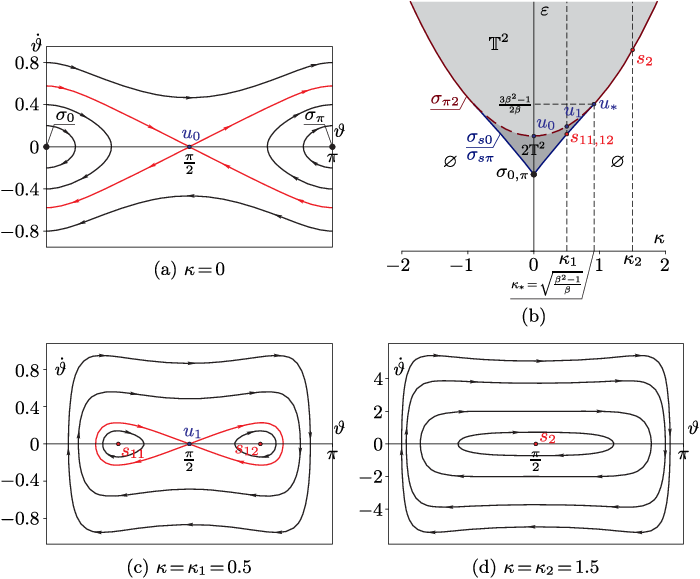}
  \caption{Bifurcation diagram (b) and examples of phase portraits (a), (c), (d)
  corresponding to different values of the integral $\kappa$ for $\alpha=0$, $\beta=1.5$.\label{bife}}
  %\twocolumn
\end{figure}

When $\kappa=0$, the phase portrait has three fixed points: two stable ones, $\sigma_0$ and $\sigma_\pi$, and an unstable one, $u_0$, at $\vth=\pi/2$.

When $0<|\kappa|<\kappa_c$, the phase portrait has three fixed points: two stable ones and an
unstable one. Next, when $|\kappa|=\kappa_c$, a pitch-fork bifurcation occurs, such that
the three fixed points merge into a stable fixed point\footnote{We do not present here the phase
portrait at the moment of bifurcation for $\kappa=\kappa_c$ since visually it does not differ
from the case $\kappa>\kappa_c$.}. As a result, when $|\kappa|>\kappa_c$, only one stable
point ($\vth=\pi/2$) remains in the phase portrait.

\section{Analysis of the trajectories of an ellipsoid\label{sec5}}

In this section, we analyze how an ellipsoid behaves during motion on a plane,
depending on the initial conditions and parameters of the problem, that is,
to what motions of the ellipsoid the periodic trajectories of the reduced
system~\eqref{eqsys} correspond.

We have already ascertained that permanent rotations correspond to the rolling motion of the
ellipsoid (both its center of mass and the point of contact) in a circle. In the
limiting cases when $\vth=0$ or $\vth=\pi$, the circle
degenerates to a point, and as
$\vth\rightarrow\pi/2$, it degenerates to a straight line. Let us find characteristic
trajectories of the ellipsoid for other solutions.

In Section~\ref{oned} we have obtained the system with one degree of freedom~\eqref{eqsys}, which admits
an energy integral. In the general position case, the solution of this system is
periodic motion with frequency $\om_\vth$ along closed trajectories.
From the expressions for the quadrature of the angle $\psi$~\eqref{dpsi} it follows that
the function $\dot\psi$ is also periodic with the same frequency $\om_\vth$.
Following our previous work~\cite{kilin2017rolling,kilin2019qualitative}, we represent
the right-hand side of the quadrature for angle $\psi$~\eqref{dpsi} as a
Fourier series expansion
\begin{equation}
\dot\psi = \om_\psi + \sum\limits_{n\in\mathbb{Z}\backslash\{0\}} {\rm\Psi}_n{\rm e}^{\j n\om_\vth t},\label{fourier}\end{equation}
where ${\rm\Psi}_n$ are the coefficients of Fourier series expansion, $\j$ is an imaginary unit, and
$\om_\psi$ is the zeroth term of the expansion which is calculated by the formula
\[
\om_\psi = \frac1{T_\vth}\int\limits_0^{T_\vth}\dot\psi(t)dt,\q T_\vth=\frac{2\pi}{\om_\vth}.\]
Integrating the expression~\eqref{fourier} over time, we can represent the expression for
$\psi(t)$ as
\begin{equation}
\psi(t) = \om_\psi t + \psi_\vth(t),\q \psi_\vth(t) = \sum\limits_{n\in\mathbb{Z}\backslash\{0\}} \frac{{\rm\Psi}_n}{\j n\om_\vth} {\rm e}^{\j n\om_\vth t}.\label{pp}\end{equation}
Here $\psi_\vth(t)$ is also a periodic function with frequency $\om_\vth$.

Next, we analyze the trajectory of the center of mass of the ellipsoid. We introduce
the complex variable $\zeta=x_c+\j y_c$, where  $x_c$ and $y_c$ are the coordinates of
the center of mass of the ellipsoid.
Using~\eqref{kin}, we can write the equation for changing the variable $\zeta$ in the form
\begin{gather*}\dot\zeta = \frac{\kappa}{J(\vth)\sin\vth}(Z(\vth)+\alpha\cos\vth){\rm e}^{\j\psi(t)}.\end{gather*}
In view of~\eqref{pp} and the Fourier series expansion of $\dot\zeta$, the expression for
$\zeta$ has the form
\[\zeta = \zeta(0)+\sum\limits_{n\in\mathbb{Z}}\frac{v_n}{\j(\om_\psi+n\om_\vth)}{\rm e}^{\j(\om_\psi+n\om_\vth)t},\]
where $v_n$ are the coefficients of the Fourier series expansion of $\dot\zeta$. Thus, in the general case the
trajectory of the center of mass of the ellipsoid is a bounded curve except for
the cases where the resonance relation $\om_\psi+n\om_\vth=0$ is satisfied.

Next, we introduce the concept of rotation number $N$
as a ratio between frequencies $\om_\psi$ and $\om_\vth$
\[N=-\frac{\om_\psi}{\om_\vth}=-\frac1{2\pi}\int\limits_0^{T_\vth}\dot\psi(t)dt.\]
The value of the rotation number determines the type of trajectory of the
ellipsoid in absolute space.
To each trajectory of the system~\eqref{eqsys} there corresponds
its own rotation number depending on the values of frequencies
$\om_\vth$ and $\om_\psi$.
The frequencies
$\om_\psi$ and $\om_\vth$, in their turn, are uniquely defined by the values of the first integrals
$\kappa$, $\eps$ and by the connectedness component of the integral manifold
$\mathcal{M}_{\kappa,\eps}$, on which the trajectory lies. Thus, the rotation number $N$
can be defined as a function given in the (many-sheeted) bifurcation diagram. Hence, the type of
trajectory of the ellipsoid in absolute space is uniquely (up to the chosen sheet) defined
by the values of the first integrals $\kappa$ and $\eps$.

We now turn to the question of possible types of trajectories of the ellipsoid depending on $\kappa$ and
$\eps$.

We note that it follows from~\eqref{dpsi} that, when $\kappa=0$, one has $N=0$ for any parameter
values and initial conditions. As a result, the integral manifolds $\mathcal{M}_{0,\eps}$
are completely filled by periodic solutions, i.e., they are families of resonant tori.
The motions on these tori can be of two types. For small energies $\eps<\eps_{\rm min}$ they
are oscillatory motions near an equilibrium position. For large energies
$\eps>\eps_{\rm min}$ the ellipsoid rolls to infinity along the meridian crossing both
of its vertices. The value of energy $\eps_{\rm min}$ is determined from the condition
\[\eps_{\rm min} = \left\{\!
      \begin{aligned}
      &\left.\eps\right|_{\sigma_0}=1+\alpha&\text{for }\, \beta^2<1+\alpha,\\
      &\left.\eps\right|_{\sigma_u,\kappa=0}=\sqrt{\frac{1+\alpha^2-\beta^2}{1-\beta^2}}\beta&\text{for }\, \beta^2>1+\alpha.
      \end{aligned}
      \right.\]
The critical invariant manifolds corresponding to the intersection of $\kappa=0$ with
bifurcation curves are filled by inclined equilibrium positions~\eqref{thz} and,
in the case of instability, by trajectories asymptotic to them.

Analyzing the resulting dependences, we can formulate the
following proposition as a conclusion from the above reasoning
 (see also~\cite{disk,borisov2017inhomogeneous}).

\begin{pro}
The trajectory of the center of mass of the ellipsoid of revolution on a plane can be
of one of the following types:
\begin{enumerate}%\itemsep=-2pt
\item \textbf{a point} in the case of equilibrium positions $\sigma_0$, $\sigma_\pi$ and
    an inclined equilibrium position for $\kappa=0$, $\eps=U(\vth_*)$.
  \item \textbf{a circle} in the case of permanent rotations~\eqref{pr}
  with a constant inclination angle $\vth_0=\const$ $($see
  Fig.~\ref{pr3d}$)$.
  \item \textbf{an unbounded straight line} in the case of permanent rotations $\sigma_{\pi/2}$
  for $\alpha=0$ or in the case of rolling along the meridian for $\kappa=0$ and $\eps>\eps_{\rm min}$.
  \item \textbf{a straight-line segment} in the case of pendulum motions near
  an equilibrium position along the meridian for $\kappa=0$ and $\eps<\eps_{\rm min}$
  and in the case of trajectories asymptotic to unstable equilibrium positions.
  \item \textbf{a trajectory going to infinity without bound} for integer
  rotation numbers $N\in\mathbb{Z}$ $($see Fig.~\ref{res}$)$.
  \item \textbf{a closed periodic trajectory} for fractionally
  rational rotation numbers $N\in\mathbb{Q}\backslash\mathbb{Z}$ $($see Fig.~\ref{xy}a$)$.
  \item \textbf{a bounded quasi-periodic curve} for all
  other trajectories of the system~\eqref{eqsys},
  except for singular trajectories asymptotic to unstable
  fixed points $($sepa\-ra\-tri\-ces$)$ $($see Fig.~\ref{xy}b$)$.
  \item \textbf{a bounded curve asymptotically tending in forward and backward time
  to two circles}; these trajectories of the ellipsoid
  correspond to the trajectories of the reduced system~\eqref{eqsys}
   which are asymptotic to unstable permanent rotations
   $\sigma_u$.
   \item \textbf{an unbounded curve asymptotically tending in forward and backward time to two straight lines};
  these trajectories of the ellipsoid correspond to the
  trajectories of the reduced system~\eqref{eqsys}
  which are asymptotic to unstable equilibrium positions
  $\sigma_0$.
\end{enumerate}
\end{pro}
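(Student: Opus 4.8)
The plan is to enumerate the trajectory types by systematically going through every stratum of the bifurcation diagram constructed in Section~\ref{sec4}, using the decomposition of the motion into the $\vth$-dynamics (governed by the reduced system~\eqref{eqsys}) and the reconstruction quadratures for $\psi$, $\vfi$ and the center of mass $\zeta = x_c + \j y_c$. The key tool is the Fourier-series representation~\eqref{pp} of $\psi(t)$ and the attendant expansion of $\zeta(t)$: once the $\vth$-motion is periodic with frequency $\om_\vth$, the center-of-mass coordinate is a sum of harmonics $\exp(\j(\om_\psi + n\om_\vth)t)$, so its qualitative behavior is controlled entirely by the rotation number $N = -\om_\psi/\om_\vth$. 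I would first treat the degenerate cases (fixed points of~\eqref{eqsys}), then the periodic $\vth$-motions with generic $N$, then the resonant cases $N\in\mathbb Z$ and $N\in\mathbb Q\setminus\mathbb Z$, and finally the separatrix trajectories asymptotic to unstable equilibria and to unstable permanent rotations.

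Concretely, the case analysis runs as follows. \textbf{(1)--(3) Fixed points of~\eqref{eqsys}:} the equilibria $\vth_0=0,\pi$ give the vertex equilibria $\sigma_0,\sigma_\pi$ (a point); the inclined equilibrium $\vth=\vth_*$ from~\eqref{thz} at $\kappa=0$ is likewise a point; the one-parameter families~\eqref{prth} are the permanent rotations, which by the Proposition on permanent rotations are circles of radius $\rho_c = Z(\vth_0)\tan\vth_0 + \alpha\sin\vth_0$, degenerating to a point as $\vth_0\to0,\pi$ and to a straight line as $\vth_0\to\pi/2$; the $\alpha=0$ family $\sigma_{\pi/2}$ from~\eqref{prpi2} is exactly the unbounded straight-line rolling along the equator. \textbf{(3)--(4) The line $\kappa=0$:} here $\dot\psi\equiv0$ by~\eqref{dpsi}, so $N=0$ and the whole integral manifold $\mathcal M_{0,\eps}$ is resonant; the $\vth$-motion is either a libration (giving a straight-line segment along a meridian) when $\eps<\eps_{\min}$ or a rotation through both poles (giving an unbounded straight line) when $\eps>\eps_{\min}$, with $\eps_{\min}$ as computed above from the energy of $\sigma_0$ or of $\sigma_u$. \textbf{(5)--(7) Generic periodic $\vth$-motion, $\kappa\neq0$:} from the $\zeta$-expansion the trajectory is bounded unless some $\om_\psi+n\om_\vth=0$; if $N\in\mathbb Z$ that resonance occurs and the center of mass drifts to infinity along a (quasi-periodically modulated) ray; if $N\in\mathbb Q\setminus\mathbb Z$ the trajectory is a closed periodic curve; otherwise it is a bounded quasi-periodic curve densely filling an annulus. \textbf{(8)--(9) Separatrices:} trajectories of~\eqref{eqsys} asymptotic in $\pm\infty$ to an unstable permanent rotation $\sigma_u$ have $\vth(t)\to\vth_\pm$ and $\dot\vth\to0$, so the center of mass spirals between the two limit circles — a bounded curve asymptotic to two circles; trajectories asymptotic to the unstable vertex equilibria $\sigma_0$ (or $\sigma_\pi$) have $\vth\to0$ with $\dot\psi\to\infty$ but in a way that, after reconstruction, yields an unbounded curve asymptotic to two straight lines.

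For each item the verification is a short computation: one inspects the limiting form of the quadratures~\eqref{dpsi}, $\dot\vfi=\om_3/\sin^2\vth$, and $\dot\zeta = \frac{\kappa}{J(\vth)\sin\vth}(Z(\vth)+\alpha\cos\vth)\,{\rm e}^{\j\psi}$ on the relevant stratum, and reads off boundedness/periodicity from the rotation number. The one point needing genuine care — and the main obstacle — is the behavior near the poles $\vth=0,\pi$, where the chosen coordinates are singular: one must check that the integrals $\int_0^{T_\vth}\dot\psi\,dt$ defining $\om_\psi$ and $N$ converge for trajectories that pass through or accumulate at a pole, and that the limiting straight-line and spiral pictures in items~(4), (9) are genuine (i.e. the apparent blow-up of $\dot\psi$ and $\dot\vfi$ is integrable and the combination $\psi+\vfi$ stays controlled, as the remark on gluing $\dot\vth\to-\dot\vth$ already hints). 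I would handle this exactly as in the earlier reduction: pass to the redundant coordinates $\bs z=(\bom,\bg)$ on $\mathcal M_4$, in which the pole is a regular point of the flow~\eqref{eqred}, carry out the asymptotic analysis there, and then push the conclusion back to the $(\vth,\psi)$ description. Everything else is a finite, if somewhat tedious, enumeration over the five diagram types of Fig.~\ref{bif}, and the statement follows by collecting the cases.
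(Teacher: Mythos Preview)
Your proposal is correct and follows essentially the same route as the paper: the Fourier decomposition of $\dot\psi$ and $\dot\zeta$, the rotation number $N=-\om_\psi/\om_\vth$ as the controlling invariant, and a case-by-case enumeration over the strata of the bifurcation diagram. In fact the paper does not give a separate proof at all --- it presents the Proposition ``as a conclusion from the above reasoning'' --- so your write-up, with its explicit treatment of the separatrix cases (8)--(9) and the pole regularization via the redundant coordinates $\bs z$, is more detailed than what the paper itself supplies.
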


Since the rotation number $N$ depends on the values of the
first integrals $\kappa$ and $\eps$, the conditions of resonance
define some resonance curves on the plane of first integrals $(\kappa,\eps)$. Examples of such curves for a symmetrically
truncated sphere with a displaced center of mass were constructed
in~\cite{kilin2019qualitative}.

Figure~\ref{res} shows an enlarged fragment of the bifurcation diagram for $\alpha=0.5$ and
$\beta=3$ (see Fig.~\ref{bif}c). The light blue curve corresponding to the resonance $N=0$
lies near the
curve $\sigma_u$ and ``goes round the curve'' as it were. But since the lower part of the
curve runs very close to $\sigma_u$, it seems --- when one looks at the figure --- as if they
coincide. Higher-order resonances $N>0$ lie still closer to $\sigma_u$, and so we do not
represent them here.
Also, Fig.~\ref{res} shows the trajectory of the center of mass of the ellipsoid
(the dark red solid curve in the left part of Fig.~\ref{res}) and of its point of contact
(dotted dark blue curve) for
the initial conditions, which correspond to the resonance $N=0$. It can be seen that, for
these initial conditions, the ellipsoid does not, on average, drift along the axis $Oy$,
but goes to infinity along the axis $Ox$.

\begin{figure}[!ht]
\centering\includegraphics[width=\textwidth]{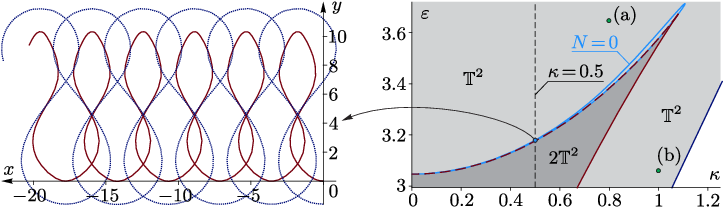}
\caption{Right: an enlarged fragment of the bifurcation
diagram (Fig.~\ref{bif}c, $\alpha=0.5$, $\beta=3$) with the curve corresponding to
resonance $N=0$.
Left: the trajectory of the center of mass (solid curve) and of the point of contact
(dotted curve) of the ellipsoid for the case where the resonance relation $N=0$ is satisfied
and for
$\kappa=0.5$.\label{res}}
\end{figure}

The numerical analysis has enabled us to find integer-valued resonances only near the
curve corresponding to unstable fixed points ($\sigma_u$), i.e., only for the case of
diagram (c) in Fig.~\ref{bif}. Based on the results obtained, we can formulate the following
hypothesis.

\begin{hyp}
The trajectories of an ellipsoid of revolution on a plane are bounded for
\begin{enumerate}
\item[$\bullet$] $\beta^2<1+\alpha$ except for the case $\kappa=0$ at $\eps>\eps_{\rm min}$;
\item[$\bullet$] $\beta^2>1+\alpha$, $\alpha>0$ and $\kappa>\kappa_{\rm max}$, where $\kappa_{\rm max}$ is a
solution of the system
\[N(\kappa,\eps)=0,\q \frac{\partial N}{\partial \eps}=0.\]
 \end{enumerate}
 %except for permanent rotations $\sigma_{\pi/2}$ with $\alpha=0$.
\end{hyp}

Figure~\ref{xy} shows the trajectories of the center of mass and of the point of contact
of the ellipsoid at resonance $N=-1/7$ (Fig.~\ref{xy}a) and in the case of a quasi-periodic trajectory (Fig.~\ref{xy}b). The values
of the first integrals for which these trajectories are constructed are shown in Fig.~\ref{res}
as points (a) and (b), respectively. Both of these cases correspond to bounded motion of
the ellipsoid on a plane.

\begin{figure}[!h]
\centering\includegraphics[width=\textwidth]{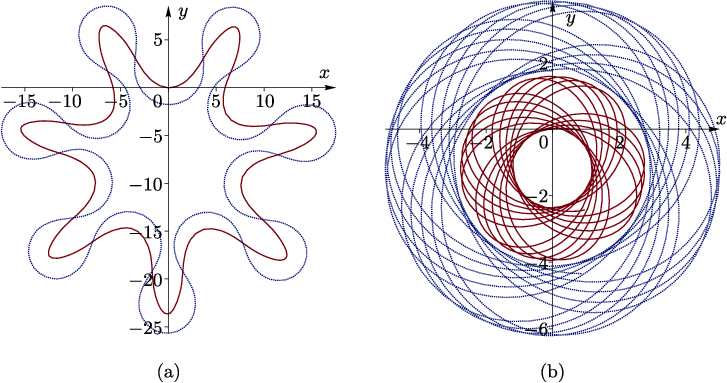}
\caption{(a) Example of a periodic trajectory (the case $N=-1/7$) for $\kappa=0.8$ and
$\vth(0)=0.4678$. (b)
Example of a quasi-periodic trajectory for $\kappa=1$ and $\vth(0)=2.1$. The solid
dark red curve denotes the trajectories of the center of mass, and the
dotted dark blue curve indicates the trajectories of the
point of contact. Both parts of Fig.~\ref{xy} are constructed for $\alpha=0.5$, $\beta=3$, $\nu=0.5$, $\eta=0.5$ and  $\dot\vth(0)=0$.\label{xy}}
\end{figure}

\section*{Summary}

In this work, we have carried out a complete qualitative analysis of the dynamics of an
ellipsoid of revolution with a displaced center of mass on a plane with no-slip and
no-spin conditions.
As is well known~\cite{borisov2008conservation,borisov2013hierarchy}, the problem of a body of revolution
rolling on a plane is integrable for the above-mentioned constraints and, moreover, all
integrals are represented in elementary functions. For an ellipsoid of revolution, we have found
particular solutions corresponding to the rolling motion in a circle. Using these solutions,
we have constructed bifurcation diagrams and performed a complete classification of these diagrams depending
on the parameters of the problem.

In addition, we have shown that in the general case the motion of the ellipsoid of revolution
occurs along a quasi-periodic bounded curve. However, depending on the values of the first integrals,
there can also exist both unbounded and closed periodic trajectories.

A possible avenue for future research is to simulate the motion of a mobile robot
of elliptic form by adding internal propulsion devices, for example, gyrostats or a
pendulum/a platform or their combination. In addition,
to simulate the motion of such a robot under conditions close to experimental ones,
it would be interesting to add to the model irregularity of the supporting surface
and/or forces and torques of rolling resistance.

\section*{Acknowledgments}
The authors thank Ivan Mamaev for useful comments and discussions.

\section*{Funding}
The work of A. Kilin (Sections~\ref{sec1}--\ref{sec3}) was carried out within the framework of the state assignment of the Ministry of Science and Higher Education of Russia (FEWS-2020-0009).
 The work of E. Pivovarova (Sections~\ref{sec4} and~\ref{sec5}) is supported by the Russian Science Foundation under grant 23-71-01045.

\bibliographystyle{myunsrt}
\bibliography{biblio}

\end{document}